\documentclass[a4paper,11pt]{amsart}

\usepackage[english]{babel} %

  \usepackage{lmodern}
  \usepackage[T1]{fontenc}

\usepackage{amssymb}
\usepackage{mathtools} %
\usepackage{slashed} %
\usepackage[babel=true]{microtype} %
\usepackage[autostyle=true]{csquotes} %
\usepackage[dvipsnames]{xcolor} %
\usepackage[biblatex=true]{embrac} %

\calclayout

\definecolor{Heather}{RGB}{164, 132, 172}
\usepackage[pdfusetitle,bookmarks,pdfpagelabels]{hyperref} %
\hypersetup{
  colorlinks,
  urlcolor=Periwinkle,
  citecolor=Heather,
  linkcolor=teal,
  breaklinks=true,
  final,
}

\usepackage[
    citestyle=numeric-comp,
    bibstyle=numeric-comp,
    backend=biber,
    url=true,
    doi=true,
    isbn=false,
    giveninits=true,
    maxbibnames=100,
    maxcitenames=10,
    sorting=nyt
]{biblatex}

\DeclareDelimFormat[textcite]{multinamedelim}{\textendash}
\DeclareDelimAlias[textcite]{finalnamedelim}{multinamedelim}

\DeclareSourcemap{
  \maps{
    \map{ %
        \step[fieldset = month, null]
        \step[fieldset = day, null]
        \step[fieldset = language, null]
    }
    \map{ %
        \step[fieldsource = doi, final]
        \step[fieldset = url, null]
    }
    \map{ %
        \step[fieldsource = eprint, final]
        \step[fieldset = url, null]
    }
  }
}

\usepackage{tikz}
\usetikzlibrary{cd} %

\usepackage[shortlabels,inline]{enumitem}
\setlist[enumerate,1]{label=\upshape(\arabic*)}
\newlist{myenumi}{enumerate}{1}
\setlist[myenumi,1]{label=\upshape(\roman*)}
\newlist{myenuma}{enumerate}{1}
\setlist[myenuma,1]{label=\upshape(\alph*)}

\usepackage{thmtools}
\declaretheorem[name=Theorem, numberwithin=section]{theorem}
\declaretheorem[name=Theorem, numbered=no]{theorem*}
\declaretheorem[name=Lemma, numberlike=theorem]{lemma}
\declaretheorem[name=Lemma, numbered=no]{lemma*}
\declaretheorem[name=Corollary, numberlike=theorem]{corollary}

\declaretheorem[name=Remark, numberlike=theorem, style=remark]{remark}

\declaretheorem[name=Theorem]{theoremx}

\numberwithin{equation}{section}
\allowdisplaybreaks[1]

\usepackage[noabbrev,capitalise]{cleveref} %
\crefdefaultlabelformat{#2\textup{#1}#3}

\usepackage[disable]{todonotes}
\NewDocumentEnvironment{myTodo}{m}{\hypersetup{hidelinks}\textsf{\textbf{#1:}} }{}

\makeatletter
\providecommand\@dotsep{5}
\def\listtodoname{List of Todos}
\def\listoftodos{\@starttoc{tdo}\listtodoname}
\makeatother

\usepackage{mymacros}
\NewDocumentCommand{\CP}{}{\mathbb{CP}}
\NewDocumentCommand{\Spinc}{}{\Spin^c}
\NewDocumentCommand{\torsion}{}{\operatorname{torsion}}

\DeclareMathOperator{\sys}{sys}

\newcommand{\st}{\textup{st}}
\newcommand{\FS}{\textup{FS}}
\newcommand{\Td}{\operatorname{Td}}

\NewDocumentCommand{\pair}{m m}{\innp{#1}{#2}}
\NewDocumentCommand{\comassp}{m m}{\abs{#1}_{\ast,#2}}
\NewDocumentCommand{\Comass}{m}{\norm{#1}_{\ast}}
\NewDocumentCommand{\stnorm}{m}{\norm{#1}_{\mathrm{st}}}
\NewDocumentCommand{\comassnorm}{m}{\norm{#1}_{\mathrm{comass}}}

\title{Stable \texorpdfstring{\(2\)}{2}-systoles, scalar curvature and spin\texorpdfstring{\(^c\)}{-c} comass bounds}

\subjclass[2020]{53C24, 53C27 (Primary); 53C21, 53C23, 53C38, 58J20, 32Q20 (Secondary)}
\author{Simone Cecchini}
\address[Simone Cecchini]{Texas A\&M University, Department of Mathematics, 155 Ireland Street, College Station, TX 77843-3368, USA}
\email{\href{mailto:cecchini@tamu.edu}{cecchini@tamu.edu}}
\urladdr{\href{https://simonececchini.org}{simonececchini.org}}

\author{Sven Hirsch}
\address[Sven Hirsch]{Columbia University, 2990 Broadway, New York, NY 10027, USA}
\email{\href{mailto:sven.hirsch@columbia.edu}{sven.hirsch@columbia.edu}}
\urladdr{\href{https://www.svenhirsch.com}{www.svenhirsch.com}}

\author{Rudolf Zeidler}
\thanks{SC: Supported by a grant from the Simons Foundation (MPS-TSM-00007902, SC)}
\thanks{RZ: Funded by the European Union (ERC Starting Grant 101116001 – COMSCAL). Views and opinions expressed are however those of the author(s) only and do not necessarily reflect those of the European Union or the European Research Council. Neither the European Union nor the granting authority can be held responsible for them.}
\thanks{RZ: Funded by the Deutsche Forschungsgemeinschaft (DFG, German Research Foundation) – Project numbers
523079177; %
427320536; %
390685587%
}
\address[Rudolf Zeidler]{Universität Potsdam, Institut für Mathematik, Karl-Liebknecht-Str.\ 24--25, 14476 Potsdam, Germany}
\email{\href{mailto:rudolf.zeidler@uni-potsdam.de}{rudolf.zeidler@uni-potsdam.de}}
\urladdr{\href{https://www.rzeidler.eu}{www.rzeidler.eu}}

\hypersetup{
  pdfauthor={Simone Cecchini, Sven Hirsch, Rudolf Zeidler}, %
}
\begin{document}

\begin{abstract}
We prove a sharp stable \(2\)-systolic inequality for complex projective space under the scalar curvature lower bound of the normalized Fubini--Study metric. If \(M\) is diffeomorphic to \(\CP^n\) and \(\scal_g\ge 4n(n+1)\), then \(\sys_2^\st(M,g)\le \pi\).
Moreover, equality holds only for the Fubini--Study metric, up to biholomorphism after choosing the corresponding complex structure. The proof uses \(\Spinc\) Dirac operators, a comass estimate for the curvature term in the Lichnerowicz formula, and stable norm--comass duality.
\end{abstract}

\maketitle

\section{Introduction}

Lower scalar curvature bounds often lead to rigidity and extremality results that
single out canonical model geometries. Classical examples include the Geroch
conjecture \cite{SchoenYau1, GromovLawson} for flat tori, the positive mass theorem for Euclidean \cite{SchoenYau2, Witten} and hyperbolic
space \cite{Wang, ChruscielHerzlich}, and Llarull's theorem \cite{Llarull} for the round sphere.

\medskip

In this paper we study the relationship between lower scalar curvature bounds and the stable 2-systole.
Stable systoles are homological systolic invariants in which one measures the asymptotic size of nontrivial homology classes. More
precisely, if \(h\in \HZ_2(M;\Z)/\torsion\), its stable norm is the asymptotic minimal area per multiple of \(h\):
\[
   \stnorm{h}
   :=
   \lim_{k\to\infty}
   \frac{1}{k}
   \inf\{\Area(T): T \text{ is an integral }2\text{-cycle representing } kh\}.
\]
The stable 2-systole is then defined by
\[
   \sys^\st_2(M,g)
   :=
   \min\{\stnorm{h}:
          0\neq h\in \HZ_2(M;\Z)/\torsion\}.
\]
This invariant is particularly well suited to scalar curvature questions in
degree two. On the one hand, it records the smallest stabilized area of a
nontrivial two-dimensional homology class. On the other hand, by the
Federer--Gromov duality theorem, the stable norm on homology is dual to the
comass norm on cohomology.
Since the curvature term in the Lichnerowicz formula for the Dirac operator twisted with a line bundle is controlled by the comass of a
degree-two curvature form, this duality turns an analytic scalar curvature estimate into a stable systolic inequality.
This point of view was introduced by Gromov~\cite[Section~5.9]{Gromov_four_lectures}, who proved a sharp systolic inequality with scalar curvature on 3-manifolds.
More recently, these ideas have been developed further by Orikasa~\cite{orikasa2025systolicinequalityscalarcurvature} and Stryker~\cite{Stryker2026Stable2Systole}.

\medskip

The main result of this paper is a sharp upper bound for the stable 2-systole of complex projective space \(\CP^n\), together with a rigidity statement for the equality case.
Let \(J_0\) be the standard complex structure on \(\CP^n\).
We normalize the Fubini--Study metric \(g_\FS\) on \((\CP^n,J_0)\) by
\[
   \int_\ell \omega_\FS=\pi
\]
for every projective line \(\ell\subset \CP^n\). With this normalization,
\[
   \scal_{g_\FS}=4n(n+1),
\]
and every projective line has area \(\pi\).
Our main theorem establishes extremality and rigidity for the stable 2-systole of Riemannian metrics on \(\CP^n\) with scalar curvature bounded from below by \(4n(n+1)\).
This characterizes the Fubini--Study metric as the unique Riemannian metric, up to biholomorphism, with constant scalar curvature \(4n(n+1)\) and stable 2-systole \(\pi\), in the precise sense specified by the following theorem.

\begin{theoremx}\label{thm:main}
Let \(M\) be a smooth manifold diffeomorphic to \(\CP^n\).
Let \(g\) be a Riemannian metric on \(M\) such that \(\scal_g \ge 4n(n+1)\).
Then
\[
\sys_2^\st(M,g)\le \pi.
\]
Moreover, if equality holds, then there exist a complex structure \(J\) on \(M\) and
a biholomorphism \(B \colon (M,J)\to (\CP^n,J_0)\) such that
\[
g=B^*g_\FS.
\]
\end{theoremx}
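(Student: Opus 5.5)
We argue by contradiction with a twisted \(\Spinc\) Dirac operator and then use stable norm--comass duality. Let \(x\in \HZ^2(M;\Z)\) be a generator, and \(h\in\HZ_2(M;\Z)\) the dual generator with \(\langle x,h\rangle=1\). Since \(\HZ_2\) has rank one, \(\sys_2^\st(M,g)=\stnorm{h}\).

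By Federer--Gromov duality, \(\stnorm{h}=\inf\{\Comass{\alpha}: \alpha \text{ closed}, [\alpha]=x\}^{-1}\) up to the pairing normalization. Hence if \(\sys_2^\st(M,g)>\pi\), there is a closed real \(2\)-form \(\alpha\) representing \(x\) with comass \(\Comass{\alpha}<1/\pi\). Rescaling gives a closed \(2\)-form \(\omega:=\pi\alpha\) with \([\omega/\pi]=x\) and pointwise comass \(<1\). Our goal is to contradict this.

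\textbf{Index step.} Equip \(M\) with the \(\Spinc\) structure whose determinant line \(L\) has \(c_1(L)=(n+1)x\); this is the canonical one for \(\CP^n\), and it exists topologically for any manifold diffeomorphic to \(\CP^n\). Twist by the line bundle \(E\) with \(c_1(E)=kx\), with \(k\) chosen so that the index is nonzero. A natural choice is to make the total effective line bundle \(L^{1/2}\otimes E\) equal to \(\mathcal O(m)\) with \(m\ge 0\). The index is then the holomorphic Euler characteristic of \(\mathcal O(m)\), namely \(\binom{n+m}{n}\neq0\); this is purely topological and given by Todd class integrals.

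Choose the unitary connection on the total line bundle with curvature \(-i\mu\omega\), where \(\mu=(n+1)/2+k\) is the effective degree. This is possible because \([\omega/\pi]=x\), so the form \(\mu\omega\) has the correct de Rham class \(2\pi\mu x\), after fixing the \(2\pi\) normalization. The Lichnerowicz formula reads
\[
D^2=\nabla^*\nabla+\frac{\scal_g}{4}+\mu\, c(\omega).
\]
The key pointwise estimate, presumably the comass bound referred to in the title, is that for a \(2\)-form \(\omega\) on a \(2n\)-dimensional space, the Clifford action satisfies \(c(\omega)\ge -\sum_j|\lambda_j|\ge -n\Comass{\omega}\). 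Here the \(\lambda_j\) are the normal-form coefficients, and the comass equals \(\max_j|\lambda_j|\).

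With \(\mu=n+1\), the case \(m=0\) (index \(1\)), we would need \(\scal/4\ge n(n+1)\). This is exactly the hypothesis, giving \(D^2\ge \nabla^*\nabla+n(n+1)(1-\Comass{\omega})>0\) on all of \(M\). This contradicts the nonzero index and proves \(\sys_2^\st\le\pi\). I expect the main obstacle to be getting these constants to match exactly: the factor \(2\) in the \(\Spinc\) curvature convention, the sign of \(\mu\) relative to the orientation, and the choice of twist so that the index is nonzero while the effective degree equals \(n+1\). I would first check all of this on Fubini--Study itself, where \(D\) must have a harmonic spinor.

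\textbf{Rigidity step.} If equality holds, compactness of the duality gives a closed form \(\omega\) with comass \(\le 1\) and the right class. Here I would invoke the attained minimizer version of Federer's duality, or pass to a limit in \(L^\infty\) and regularize. The index gives a nonzero harmonic spinor \(\psi\), and integrating the Lichnerowicz identity forces every inequality to be an equality:
\begin{itemize}
\item \(\nabla\psi=0\);
\item \(\scal_g\equiv 4n(n+1)\);
\item \(|\lambda_j|=1\) for all \(j\), with the signs aligned, on \(\operatorname{supp}\psi=M\).
\end{itemize}
The support is all of \(M\) because a parallel spinor has constant norm. Then \(\omega\) has all normal-form eigenvalues \(\pm1\) with equal sign, so \(J:=g^{-1}\omega\) is an orthogonal almost complex structure with \(\omega(\cdot,\cdot)=g(J\cdot,\cdot)\).

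A parallel spinor for the \(\Spinc\) connection forces \(\nabla J=0\), via the standard pure spinor argument: \(\psi\) lies in the extremal eigenspace of \(c(\omega)\), which determines \(J\). Hence \((M,g,J)\) is Kähler with Kähler form \(\omega\). The Ricci form is the curvature of the canonical bundle, which here must be proportional to \(\omega\), giving \(\rho=2(n+1)\omega\); so \(g\) is Kähler--Einstein. Kähler--Einstein with positive scalar curvature on a manifold diffeomorphic to \(\CP^n\), with \([\omega]\) the generator times \(\pi\), gives \((M,J)\) biholomorphic to \(\CP^n\). This uses Kobayashi--Ochiai, or Hirzebruch--Kodaira/Yau: \(c_1=(n+1)x\) with \(x\) ample. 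By uniqueness of Kähler--Einstein metrics (Bando--Mabuchi), the pullback is \(g_\FS\) with the stated normalization.
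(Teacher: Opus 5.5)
Your overall route matches the paper's. You use the \(\Spinc\) Dirac operator with determinant class \((n+1)x\), whose index is the Todd genus \(1\), together with the pointwise bound \(\|\iu\,\clm(\omega)\|\le n\Comass{\omega}\) and stable norm--comass duality. In the equality case you pass from Clifford rigidity to a parallel Kähler form, then to Kähler--Einstein, then to Hirzebruch--Kodaira--Yau or Kobayashi--Ochiai, and finally to Bando--Mabuchi.

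The inequality is right, but your index bookkeeping is internally inconsistent:
\begin{itemize}
\item Twisting the canonical \(\Spinc\) structure by \(E\) gives index \(\chi(E)\), not \(\chi(L^{1/2}\otimes E)\).
\item With your definition \(\mu=(n+1)/2+k\), taking \(\mu=n+1\) forces \(k=(n+1)/2\). That is non-integral for even \(n\), and then \(L^{1/2}\otimes E=\mathcal O(n+1)\), not \(\mathcal O(0)\).
\item \([\mu\omega]=\pi\mu x\), not \(2\pi\mu x\).
\end{itemize}
The two factor-two slips cancel. The clean version uses no twist: the index is \(\chi(\mathcal O)=1\), and with \(F_A=2\pi\iu(n+1)\alpha\) the curvature term is \(\tfrac12\clm(F_A)=\iu(n+1)\clm(\omega)\ge -n(n+1)\Comass{\omega}\).

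The genuine gap is the first sentence of your rigidity step. The comass norm of \(x_{\R}\) is in general not attained by a smooth closed form. Weak-\(\ast\) compactness only gives a closed \(L^\infty\) form \(\eta_\infty\) with \(\Comass{\eta_\infty}\le C\). Regularizing does not help: a smooth form in that class generally has comass strictly larger than \(C\), which destroys exactly the equality you want to exploit. So you cannot simply take a harmonic spinor for a smooth connection with curvature \(2\pi\iu\eta_\infty\) and read off pointwise equalities.

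The paper closes this in \cref{lem:general-equality-parallel-spinor}:
\begin{itemize}
\item Take smooth \(\eta_j\) in the class with \(\Comass{\eta_j}\le C+\varepsilon_j\).
\item Use Coulomb-gauge connections \(A_j=A_0+2\pi\iu a_j\), with \(da_j=\eta_j-\eta_0\), \(d^*a_j=0\), \(a_j\perp\mathcal H^1\). Hodge estimates bound these uniformly in \(W^{1,p}\) for all \(p<\infty\).
\item Take \(L^2\)-normalized harmonic spinors with \(\|\nabla_{A_j}\psi_j\|_{L^2}^2\le\pi n\varepsilon_j\).
\item Passing to limits gives a \(W^{1,p}\) connection \(A_\infty\) with curvature \(2\pi\iu\eta_\infty\), \(\Comass{\eta_\infty}\le C\) by weak-\(\ast\) lower semicontinuity, and a nonzero \(H^1\) spinor with \(\nabla_{A_\infty}\psi_\infty=0\).
\end{itemize}
Only then does the a.e.\ equality case of \cref{lem:general-clifford} give \(\eta_\infty=C\omega_\psi\), where \(\omega_\psi(X,Y)=-\langle \iu\,\clm(X\wedge Y)\psi_\infty,\psi_\infty\rangle\). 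This form is Levi-Civita parallel, because the \(U(1)\)-part of the connection cancels, and hence smooth. That a posteriori smoothness of \(\eta_\infty\), \(A_\infty\) and \(\psi_\infty\) is what makes your pure-spinor argument for \(\nabla J=0\) and your identification of the Ricci form legitimate.

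Alternatively, you could work directly with an \(L^\infty\) minimizer and its \(W^{1,p}\) Coulomb-gauge connection. Its Dirac operator is a compact zeroth-order perturbation of a smooth one, so it still has index \(1\), and you would justify the integrated Weitzenböck identity by approximation. Either way, such an argument has to be supplied. Your remaining steps agree with the paper, which uses Hirzebruch--Kodaira--Yau here and Kobayashi--Ochiai in the odd-dimensional case.
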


\begin{remark}
Stryker~\cite[Theorem~1.6]{Stryker2026Stable2Systole} recently obtained an upper bound for the stable 2-systole of \(\CP^{2q+1}\) with scalar curvature bounded from below by a positive constant.
These bounds are not sharp.
For example, on \(\CP^3\) with \(\scal_g \ge 48 = \scal_{g_\FS}\), Stryker's explicit estimate~\cite[Theorem~1.8]{Stryker2026Stable2Systole} gives \(\sys_2^\st(\CP^3,g) \le 5\pi\).
In contrast, Theorem~\ref{thm:main} gives the sharp upper bound for the stable \(2\)-systole of \(\CP^n\) in every complex dimension and shows that equality occurs only for the Fubini--Study metric, up to biholomorphism.
\end{remark}

\begin{remark}
In real dimension four, the case \(M\cong \CP^2\) also follows from LeBrun's perturbed Seiberg--Witten scalar curvature estimate~\cite[Theorem~1]{LeBrun:YamabeSW}.
Indeed, applying this estimate to the \(g\)-harmonic representative of the positive generator \(h\in \HZ^2(\CP^2;\Z)\), and using stable
norm--comass duality, gives
\[
\scal_g\ge 24
\qquad\Longrightarrow\qquad
\sys_2^\st(\CP^{2},g)\le \pi .
\]
The equality case follows from LeBrun's rigidity statement~\cite[Theorem~2]{LeBrun:YamabeSW}, together with uniqueness of Kähler--Einstein metrics on \(\CP^{2}\).
\end{remark}

\begin{remark}\label{rem:homological-systole}
Suppose that \(g\) is a Kähler metric on the standard complex manifold \((\CP^n,J_0)\) with \([\omega_g]=\lambda h\), where \(h=c_1(\mathcal O_{\CP^n}(1))\).
Then the Kähler form \(\omega_g\) calibrates projective lines and the stable 2-systole \(\sys_2^\st(\CP^n,g)\) coincides with the homological 2-systole
\[
\sys_2(\CP^n,g):=
\inf\bigl\{\Area_g(\Sigma)\bigr\},
\]
where \(\Sigma\) ranges over all integral \(2\)-cycles with \(0\neq [\Sigma]\in \HZ_2(\CP^n;\Z)\).
More precisely, every projective line \(\ell\simeq \CP^1\) is area minimizing in its real homology class and
\[
\sys^\st_2(\CP^n,g)=\sys_2(\CP^n,g)=\Area_g(\ell)=\int_\ell \omega_g=\lambda.
\]
Thus, for Kähler metrics on \((\CP^n,J_0)\), Theorem~\ref{thm:main} recovers the sharp homological 2-systole estimate
\[
\scal_g\ge 4n(n+1)
\quad\Longrightarrow\quad
\sys_2(\CP^n,g)\le \pi,
\]
with equality only if \(g\) is the Fubini--Study metric, up to a holomorphic automorphism of \((\CP^n;J_0)\).
In this Kähler setting the inequality itself also follows directly from the Chern--Weil formula for the total scalar curvature; the point of Theorem~\ref{thm:main} is that no Kähler assumption is imposed on \(g\).
For \(n=2\), this recovers Sha's sharp 2-systole upper bound and rigidity result for Kähler metrics on
\(\CP^2\); see \cite[Theorem 1.1]{sha20262systolecompactkahlersurfaces}.
\end{remark}

Thus Theorem~\ref{thm:main} shows that, among all Riemannian metrics on \(\CP^n\) with scalar curvature bounded below by that of the normalized Fubini--Study metric, the stable \(2\)-systole is maximized precisely by the Fubini--Study metric.
In this sense, Theorem~\ref{thm:main} is a scalar curvature rigidity theorem for complex projective space, in the same spirit as the rigidity results mentioned above.

\medskip

We also prove an odd-dimensional analogue:

\begin{theoremx}\label{thm:main-odd}
Let \(M\) be a smooth manifold diffeomorphic to \(\CP^n\times \Sphere^1\). Let \(g\) be a Riemannian metric on \(M\) such that \(\scal_g\geq 4n(n+1)\).
Then
\[
 \sys_2^\st(M,g)\leq \pi.
\]
Moreover, if equality holds, then the universal cover of \((M,g)\) is isometric to
\[
 (\CP^n,g_\FS)\times \R,
\]
where \(g_\FS\) is normalized as above.
\end{theoremx}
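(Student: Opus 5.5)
We prove Theorem~\ref{thm:main-odd} by running the \(\Spinc\) argument of Theorem~\ref{thm:main} on \(M\cong\CP^n\times\Sphere^1\). Since \(\HZ_2(M;\Z)/\torsion\cong\Z\) is generated by \(\ell\times\{pt\}\), stable norm--comass duality says that \(\sys_2^\st(M,g)=\stnorm{a}\) for the generator \(a\). This equals \(1/\Comass{\alpha}\) minimized over closed \(2\)-forms \(\alpha\) representing the dual class \(h\in \HZ^2(M;\R)\), where \(h\) is pulled back from \(\CP^n\). So it suffices to show that every closed form \(\alpha\) in the class \(h\) satisfies \(\Comass{\alpha}\ge 1/\pi\), i.e.\ \(\sup_x\comassp{\alpha}{x}\ge 1/\pi\). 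We argue by contradiction and assume \(\Comass{\alpha}<1/\pi\).

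\textbf{Step 1: reduce to a compact even-dimensional manifold.} Pass to the finite cyclic covers \(M_k=\CP^n\times\Sphere^1_k\), with the pulled-back metric and form; the comass is unchanged. To get an even-dimensional index problem we take the product \(M_k\times\Sphere^1_L\) with a flat circle of length \(L\). This preserves the scalar bound and the form \(\alpha\), and \(\HZ^2\) of the product still contains \(h\). On \(\CP^n\times T^2\) we use the \(\Spinc\) structure whose determinant line is \(L^{\otimes(n+1)}\), with \(c_1(L)=h\), exactly as in the even case. This is spin\(^c\) with the curvature of the line bundle realized by a connection of curvature \(2\pi i\,\alpha\), up to sign conventions. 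We twist by a flat line bundle on the torus factor, with holonomy chosen so that the index does not vanish. This is the Gromov--Lawson torus trick: use a family index, or equivalently the index on \(\CP^n\times T^2\) twisted by a Poincaré-type bundle, which equals the \(\Td\)-genus \(\int_{\CP^n}e^{(n+1)h/2}\hat A=1\).

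\textbf{Step 2: Lichnerowicz estimate.} The twisting bundle can be chosen almost flat in the torus directions. Its curvature is \(O(1/(kL))\), so a small contribution is controlled by the lengths. The \(\Spinc\) Lichnerowicz formula gives \(D^2=\nabla^*\nabla+\scal/4+\tfrac12 c(i F)\). The comass estimate from the paper for the curvature term bounds \(\tfrac{n+1}{2}\cdot 2\pi\cdot\abs{c(\alpha)}\) in terms of the comass, yielding the bound \(n(n+1)\cdot 4\pi\comassp{\alpha}{x}\cdot\tfrac{1}{4}\)-type, which is sharp precisely at \(\omega_\FS\). This contribution is strictly less than \(\scal_g/4\ge n(n+1)\) when \(\Comass{\alpha}<1/\pi\). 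For \(k,L\) large the operator is invertible, which contradicts the nonzero index. Hence \(\sys_2^\st\le\pi\). I expect the main obstacle to be this step: making the almost-flat twisting compatible with a uniform spectral gap, and choosing it so that it does not degrade the comass inequality.

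\textbf{Step 3: equality.} If \(\sys_2^\st=\pi\), there is a closed \(\alpha\) with \(\Comass{\alpha}=1/\pi\). On the universal cover \(\CP^n\times\R\) we take limits of almost-harmonic spinors from the above construction, or argue directly on \(M\) with a parallel-in-\(\Sphere^1\) twist. This produces a nonzero spinor \(\psi\) with \(D\psi=0\) and equality everywhere in the pointwise estimates. Equality forces \(\nabla\psi\) to vanish in the appropriate twisted sense, \(\scal_g\equiv 4n(n+1)\), and \(\alpha\) to have comass \(1/\pi\) everywhere and to be of maximal "Kähler type". The equality case of the comass estimate then produces an almost complex structure \(J\) on the horizontal distribution \(\ker(\iota\alpha)^\perp\) with \(\pi\alpha=g(J\cdot,\cdot)\). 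The kernel of \(\alpha\) is a line field, and parallelism of the spinor (a Kählerian Killing-type equation) shows that this line field is parallel. The de Rham decomposition then splits the universal cover isometrically as \(N\times\R\). On \(N\) the form \(\pi\alpha\) is a parallel Kähler form with Ricci form proportional to it. Since \(N\) is simply connected and diffeomorphic to \(\CP^n\), the rigidity of Theorem~\ref{thm:main}, applied to \(N\), identifies \(N\) with \((\CP^n,g_\FS)\).
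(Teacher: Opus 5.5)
Your proof of the inequality is a legitimate variant of the paper's, but the equality case has a genuine gap.

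\textbf{The inequality.} The paper uses the spectral flow of \(D_{A+4\pi\iu t\alpha}\), \(t\in[0,1]\), on \(M\) itself (\cref{thm:general_comass_comparison_odd}). You instead suspend to \(M\times\Sphere^1_L\) and twist by a degree-one line bundle pulled back along \(u\times\mathrm{id}\colon M\times\Sphere^1_L\to\Sphere^1\times\Sphere^1_L\). Its index is \(\pm\langle[M],\xi\smile e^{c/2}\hat A(TM)\rangle=\pm1\) and its curvature is \(O(1/L)\), so the covers \(M_k\) are not needed. This uses only the even-dimensional index theorem and is essentially equivalent, since the spectral flow equals the index of the suspended operator.

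Two corrections are needed. First, a \emph{flat} twist never changes the index, which is \(0\) on \(\CP^n\times T^2\); you need the non-flat Poincar\'e-type bundle. Second, on the \((2n+2)\)-dimensional product, \cref{lem:general-clifford} gives the constant \(n+1\). That yields only \(\pi(n+1)^2\Comass{\alpha}\), which kills the estimate. You must use that \(\alpha\) is pulled back from the odd-dimensional \(M\). Then at most \(n\) normal-form coefficients are nonzero, so \(\abs{\iu\,\clm(\alpha)}_{\mathrm{op}}\le n\Comass{\alpha}\). With this, the uniform margin \(\pi n(n+1)\delta\) absorbs the \(O(1/L)\) term, so Step~2 is not where the difficulty lies.

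\textbf{The gap in Step~3.} You start from ``there is a closed \(\alpha\) with \(\Comass{\alpha}=1/\pi\)''. Duality gives \(\comassnorm{x_\R}=1/\pi\) only as an infimum. A priori a minimizer is known to exist only as a closed \(\Lp^\infty\) form (by weak-\(*\) compactness), not as a smooth one. So you cannot choose a smooth connection with curvature \(2\pi\iu(n+1)\alpha\) and run the index/Lichnerowicz argument with it.

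The paper flags exactly this issue and handles it in \cref{lem:general-equality-parallel-spinor-odd}:
\begin{itemize}
\item It takes smooth \(\eta_j\) with \(\Comass{\eta_j}\le C+\varepsilon_j\). Via spectral flow on the \emph{fixed compact} \(M\), it obtains harmonic spinors \(\psi_j\) for \(D_{A_j^{t_j}}\) with \(\norm{\nabla\psi_j}_{\Lp^2}\to 0\).
\item Hodge gauge gives uniform \(\SobolevW^{1,p}\) bounds on the connections. Weak-\(*\) compactness then gives \(\eta_\infty\in\Lp^\infty\) with \(\Comass{\eta_\infty}\le C\), a \(\SobolevW^{1,p}\) connection, and a nonzero parallel \(\psi_\infty\in\SobolevH^1\).
\item Smoothness comes only a posteriori. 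The spinor bilinear \(\omega(X,Y)=-\langle\iu\,\clm(X\wedge Y)\psi_\infty,\psi_\infty\rangle\) is parallel, and the equality case of \cref{lem:general-clifford} forces \(\eta_\infty=C\omega\) almost everywhere.
\end{itemize}
This identification is also what makes \(\ker\omega\) parallel, which you attribute only vaguely to a ``Killing-type equation''.

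Your fallback of taking limits over \(M_k\times\Sphere^1_L\) with \(k,L\to\infty\) is no substitute as stated. \(\Lp^2\)-normalized almost-parallel spinors on these growing manifolds can tend to zero locally unless you localize and recenter. Working on \(M\) with \(t_j\in[0,1]\) avoids precisely this.

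\textbf{Two further steps need arguments.}
\begin{itemize}
\item The Einstein condition. The paper derives \(\Ric=2\pi C\,g\) on \((\ker\omega)^\perp\) and \(\Ric=0\) on \(\ker\omega\). It uses the identity \(\clm(\Ric X)\psi_\infty=\clm(X\lrcorner F)\psi_\infty\) for parallel \(\mathrm{Spin}^c\) spinors.
\item The identification of \(N\). From \(N\times\R\cong\CP^n\times\R\) you only get that \(N\) is closed and homotopy equivalent to \(\CP^n\). Upgrading this to a diffeomorphism needs the h-cobordism theorem, which is unavailable smoothly for \(n=2\). So Theorem~\ref{thm:main} cannot simply be applied to \(N\). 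The paper sidesteps this: \([\omega_N]=\pi h_N\) and \(\rho_N=2(n+1)\omega_N\) give \(c_1(N)=(n+1)h_N\). Hence \(N\) is Fano of index \(n+1\), and Kobayashi--Ochiai plus Bando--Mabuchi finish the proof.
\end{itemize}
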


\begin{remark}
    When \(n=1\), Theorem~\ref{thm:main-odd} recovers \cite[Section~5.9]{Gromov_four_lectures}.
\end{remark}
Let us briefly indicate the mechanism behind these results.
Underlying Theorem \ref{thm:main} is the following general comass comparison result which may be of independent interest.

\begin{theoremx}\label{thm:general_comass_comparison intro}
    Let \(M^{2m}\) be an even-dimensional closed connected oriented manifold and let \(c \in \HZ^2(M;\Z)\) such that \(c \equiv w_2(\T M) \mod 2\) and
    \[\pair{[M]}{\eu^{c/2}\Ahat(\T M)} \neq 0.\]
    Let \(c_{\R}\in \HZ^2(M;\R)\) denote the image of \(c\) under the natural map \(\HZ^2(M;\Z)\to \HZ^2(M;\R)\).
    Then for every Riemannian metric \(g\) on \(M\), we have
    \[
        \min_{M} \scal_g \leq 4 m \pi \comassnorm{c_{\R}}.
    \]
    Moreover, if equality holds and \(c_{\R}\neq 0\), then \((M,g)\) is Kähler--Einstein with Kähler form \(\omega\) satisfying
    \(
        [\omega]=\frac{c_{\R}}{\comassnorm{c_{\R}}}
    \)
    in \(\HZ^2(M;\R)\).
\end{theoremx}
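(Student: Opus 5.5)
The inequality comes from the \(\Spinc\) Lichnerowicz formula combined with an index computation and a pointwise Clifford-norm estimate. The equality case is handled by a limiting argument that produces a parallel \(\Spinc\) spinor.

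\emph{Setup.} Since \(c\equiv w_2(\T M)\bmod 2\), there is a \(\Spinc\) structure on \(M\) whose determinant line bundle \(L\) satisfies \(c_1(L)=c\). By the index theorem, the associated \(\Spinc\) Dirac operator \(D_A\), for any unitary connection \(A\) on \(L\), has
\[
\operatorname{ind} D_A=\pair{[M]}{\eu^{c/2}\Ahat(\T M)}\neq 0,
\]
so \(\ker D_A\neq 0\) for every choice of \(A\). The Lichnerowicz formula reads
\[
D_A^2=\nabla^*\nabla+\tfrac{\scal_g}{4}+\tfrac{1}{2}F_A\cdot,
\]
where \(F_A\) is the curvature and \(\tfrac{i}{2\pi}F_A\) represents \(c_{\R}\).

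\emph{Pointwise estimate.} At a point, write a real \(2\)-form as \(\eta=\sum_{j=1}^m\lambda_j\, e^{2j-1}\wedge e^{2j}\) in an adapted orthonormal coframe. Its Clifford action on spinors is diagonalizable with eigenvalues \(i(\pm\lambda_1\pm\dots\pm\lambda_m)\). Hence its operator norm is \(\sum_j\abs{\lambda_j}\le m\max_j\abs{\lambda_j}\), and the latter is \(m\) times the pointwise comass of \(\eta\).

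\emph{Proof of the inequality.} Fix \(\varepsilon>0\) and choose a closed real \(2\)-form \(\eta_\varepsilon\) representing \(c_{\R}\) with \(\comassnorm{\eta_\varepsilon}\le\comassnorm{c_{\R}}+\varepsilon\). Since \(2\pi\eta_\varepsilon\) represents \(2\pi c_{\R}\), there is a connection \(A_\varepsilon\) with \(F_{A_\varepsilon}=-2\pi i\,\eta_\varepsilon\). For \(0\neq\psi\in\ker D_{A_\varepsilon}\), the Lichnerowicz formula and the pointwise estimate give
\[
0=\int_M\abs{\nabla\psi}^2+\int_M\Bigl(\tfrac{\scal_g}{4}+\tfrac12\pair{F_{A_\varepsilon}\psi}{\psi}\Bigr)
\ge\int_M\abs{\nabla\psi}^2+\Bigl(\tfrac{\min\scal_g}{4}-m\pi(\comassnorm{c_{\R}}+\varepsilon)\Bigr)\int_M\abs{\psi}^2 .
\]
Letting \(\varepsilon\to0\) yields \(\min_M\scal_g\le 4m\pi\comassnorm{c_{\R}}\).

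\emph{Equality case, convergence.} Normalize \(\norm{\psi_\varepsilon}_{L^2}=1\). The display above gives \(\int\abs{\nabla\psi_\varepsilon}^2\le m\pi\varepsilon\to0\). The forms \(\eta_\varepsilon\) are bounded in \(L^\infty\), so after gauge fixing (Coulomb gauge, \(A_\varepsilon\) bounded in \(W^{1,p}\)) a subsequence converges weakly. The limit is a closed \(L^\infty\) form \(\eta\) representing \(c_{\R}\) with comass \(\le\comassnorm{c_{\R}}\), hence exactly \(\comassnorm{c_{\R}}\). The spinors converge to a nonzero \(\psi\) with \(\nabla^A\psi=0\) and \(D_A\psi=0\).

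\emph{Equality case, pointwise saturation.} Being parallel, \(\psi\) has constant length and vanishes nowhere. All inequalities are then saturated pointwise:
\begin{itemize}
\item \(\scal_g\equiv 4m\pi\comassnorm{c_{\R}}\);
\item \(\abs{\lambda_j}=\comassnorm{c_{\R}}\) for every \(j\);
\item \(\psi\) lies in the extremal eigenspace of Clifford multiplication by \(\eta\).
\end{itemize}
Thus \(\omega:=\eta/\comassnorm{c_{\R}}\) has all eigenvalues \(\pm1\) and defines an orthogonal almost complex structure \(J\). The extremal eigenspace is one-dimensional, so \(\omega\) is pure and can be recovered algebraically from \(\psi\), for instance as \(\pair{e_i e_j\psi}{\psi}\) up to normalization. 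Since \(\psi\) is parallel (the \(U(1)\)-part of \(\nabla^A\) does not affect this bilinear), \(\omega\) is parallel, hence smooth. Therefore \((g,J)\) is Kähler with \([\omega]=c_{\R}/\comassnorm{c_{\R}}\).

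\emph{Equality case, Einstein condition.} The integrability condition of a parallel \(\Spinc\) spinor states that \(\mathrm{Ric}(X)\cdot\psi\) is determined by \((X\lrcorner F_A)\cdot\psi\). For the canonical-type spinor of a Kähler manifold this identifies the Ricci form with \(iF_A=2\pi\eta\), a constant multiple of \(\omega\). Hence \(g\) is Einstein.

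\emph{Main obstacle.} The hardest step is the equality case, specifically the weak-limit regularity argument: one must make the gauge fixing and the convergence \(\psi_\varepsilon\to\psi\) rigorous with only \(L^\infty\) control on the curvature. An alternative I would try first is to show directly that a comass-minimizing smooth closed representative exists. The pointwise algebra, by contrast, is routine.
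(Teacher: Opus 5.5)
Your proposal is correct and follows essentially the same route as the paper. The inequality comes from the index theorem, the $\mathrm{Spin}^c$ Lichnerowicz formula and the Clifford bound $|i\,c(\eta)|_{\mathrm{op}}\le m|\eta|_{*}$. The equality case uses a Coulomb-gauge $W^{1,p}$ compactness argument to obtain an $L^\infty$ limit form and a parallel spinor, then pointwise saturation, identification of $\eta_\infty/C$ with the (hence smooth and parallel) spinor bilinear, and the Ricci integrability identity to get the Einstein condition.

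Drop the "alternative" of first finding a smooth comass-minimizing representative directly. Such minimizers are not known to be smooth a priori, which is exactly why smoothness has to be recovered afterwards from the parallel spinor, as your main argument and the paper both do.
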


We also have an odd-dimensional analogue of Theorem~\ref{thm:general_comass_comparison intro}, see \cref{thm:general_comass_comparison_odd}, which is needed to prove Theorem~\ref{thm:main-odd}.

\begin{remark}
    This rigidity result is closely related to results of \textcite{moroianu-parallel} and \textcite[Theorem~1.9]{Goette-Semmelmann:spinC}, from which one would already expect that \(M\) must locally be a product of a Kähler manifold with a Ricci-flat manifold.
    The reason why there is no Ricci-flat factor in our case is that our formulation of the inequality in terms of the comass together with \(c_{\R}\neq 0\) forces equality in our sharp pointwise Clifford-algebra estimate (\cref{lem:general-clifford}), which yields a stronger rigidity conclusion.
    An additional technical issue that we have to overcome in our case is that the form minimizing the comass in a given cohomology class is a priori not necessarily smooth.
\end{remark}

The relevant \(\Spinc\) index condition in Theorem \ref{thm:general_comass_comparison intro} and \cref{thm:general_comass_comparison_odd} produces a harmonic spinor: directly in even dimensions and by spectral flow in odd dimensions.
The \(\Spinc\) Lichnerowicz formula then relates the scalar curvature to the curvature of the determinant line bundle.
A sharp pointwise Clifford-algebra estimate (\cref{lem:general-clifford}) bounds this curvature term in terms of the comass of the underlying degree-two cohomology class.
Finally, stable norm--comass duality (\cref{corollary:rank-one-duality}) converts the resulting scalar curvature estimate into the stable systolic inequalities above.

\medskip

The equality case is governed by the equality case in the Clifford estimate.
In even dimensions, this forces the relevant two-form to be a parallel Kähler form and the metric to be Kähler--Einstein.
In odd dimensions, the same argument produces a parallel two-form with one-dimensional kernel, which yields the isometric splitting of the universal cover.
For \(\CP^n\) and \(\CP^n\times \Sphere^1\), the final rigidity statements follow from the Hirzebruch--Kodaira--Yau and Kobayashi--Ochiai characterizations of projective space, together with the uniqueness of Kähler--Einstein metrics on \(\CP^n\).

\medskip

The paper is organized as follows.
In \cref{section:notation} we recall the background material on the comass norm and \(\Spinc\) structures.
\cref{sec:abstract-results} contains the main abstract estimates: we prove sharp scalar curvature--comass bounds for \(\Spinc\) manifolds, together with the corresponding rigidity statements in the equality case.
In \cref{sec:applications} we turn to geometric applications.
After recalling the stable norm--comass duality, we apply the results of \cref{sec:abstract-results} to prove Theorems~\ref{thm:main} and~\ref{thm:main-odd}.

\subsection*{Acknowledgments}
Part of this work was carried out at the Lonavala Geometry Festival and the authors would like to acknowledge the ideal working conditions.
SC and RZ gratefully acknowledge the hospitality of the Columbia Mathematics Department.
SC and SH thank Demetre Kazaras for several inspiring conversations on Gromov's spinorial approach to scalar-curvature systolic inequalities.
We thank Luca Di Cerbo for pointing out LeBrun's work on Yamabe constants~\cite{LeBrun:YamabeSW}, which in real dimension four recovers our sharp bound and rigidity.

\section{Notation and conventions}\label{section:notation}

Throughout, \(M\) denotes a closed oriented manifold and \(g\) a Riemannian metric on
\(M\). Since only degree two enters the sequel, we formulate the notions of mass,
comass, and stable norm in that degree.

\subsection{Comass}\label{ss:comass}

For a real \(2\)-form \(\omega \in \Dforms^2(M)\) and \(p \in M\), define the
pointwise comass of \(\omega\) at \(p\) by
\[
\comassp{\omega}{p}
:=
\sup \bigl\{
\abs{\omega_p(v)} : v \in \Lambda^2 \T_pM \text{ simple and } \norm{v} = 1
\bigr\}.
\]
Its global comass is
\[
\Comass{\omega}
:=
\sup_{p \in M} \comassp{\omega}{p}.
\]
Equivalently, for every \(p \in M\) and every simple \(v \in \Lambda^2 \T_pM\),
\[
\abs{\omega_p(v)} \le \comassp{\omega}{p}\,\norm{v}.
\]

For \(\phi \in \HZ^2(M;\R)\), its comass norm is
\[
\comassnorm{\phi}
:=
\inf \bigl\{
\Comass{\omega} : \omega \in \Dforms^2(M),\ \D\omega = 0,\ [\omega] = \phi
\bigr\}.
\]
This defines a norm on \(\HZ^2(M;\R)\). In particular, for every
\(\phi \in \HZ^2(M;\R)\) and every \(\varepsilon > 0\), there exists a
smooth closed real \(2\)-form \(\omega\) such that
\[
[\omega] = \phi
\qquad\text{and}\qquad
\Comass{\omega} \le \comassnorm{\phi} + \varepsilon.
\]

\subsection{\texorpdfstring{\(\Spinc\)}{Spin c} structures and Dirac operators}

If \(c \in \HZ^2(M;\Z)\), we write \(c_{\R} \in \HZ^2(M;\R)\) for its image under the
natural map \(\HZ^2(M;\Z) \to \HZ^2(M;\R)\).
Suppose now that \(M = M^{2m}\) is even-dimensional. A \(\Spinc\) structure on \(M\) comes with a complex spinor bundle
\[
S = S^+ \oplus S^-
\]
and a Hermitian determinant line bundle \(L \to M\) satisfying
\[
\chernclass_1(L) \equiv w_2(\T M) \mod 2.
\]
Given a unitary connection \(A\) on \(L\), we write
\[
\Dirac_A^+ \colon \Ct^\infty(M,S^+) \to \Ct^\infty(M,S^-),
\qquad
\Dirac_A \colon \Ct^\infty(M,S) \to \Ct^\infty(M,S)
\]
for the associated chiral and full \(\Spinc\) Dirac operators. Our curvature convention %
is that \(\LineCurv_A\) is an \(\iu \R\)-valued \(2\)-form; thus we may write
\[
\LineCurv_A = 2\pi \iu \, \eta
\]
for a real closed \(2\)-form \(\eta\), and then \([\eta] = \chernclass_1(L)_{\R}\).

The Atiyah--Singer index theorem~\cite[Theorem~D.15]{LawsonMichelsohn1989} gives
\begin{equation}\label{equation:spinc-index}
\ind(\Dirac_A^+)
= \pair{[M]}{\eu^{\chernclass_1(L)/2}\Ahat(\T M)},
\end{equation}
where \(\Ahat(\T M)\in \HZ^{4*}(M;\Q)\) is the \(\Ahat\)-class of \(\T M\); see \cite[Chapter III,\S 11]{LawsonMichelsohn1989}.
Moreover, the \(\Spinc\) Lichnerowicz formula~\cite[Theorem~D.12]{LawsonMichelsohn1989} reads
\begin{equation}\label{equation:spinc-lichnerowicz}
\Dirac_A^2
= \nabla_A^* \nabla_A + \frac{\scal_g}{4} + \frac{1}{2} \clm(\LineCurv_A).
\end{equation}

\section{A general comass comparison result for scalar curvature}\label{sec:abstract-results}

We start with a linear algebra lemma which relates the comass of a form to the corresponding curvature term in the Lichnerowicz formula.

\begin{lemma}\label{lem:general-clifford}
Let \(V\) be an \(n\)-dimensional Euclidean vector space, let \(E\) be a Hermitian \(\Cl(V)\)-module with Clifford multiplication \(\clm\), and set \(m:=\lfloor\tfrac{n}{2}\rfloor\).
Then for every \(\eta\in \Lambda^2V^*\),
\[
    \abs{\iu\,\clm(\eta)}_\op\le m\,\abs{\eta}_{\ast}.
\]
Moreover, if \(C>0\), \(\abs{\eta}_{\ast}\le C\), and \(0\neq \psi\in E\) satisfy \(\innp{\iu\,\clm(\eta)\psi}{\psi}=-mC\,\abs{\psi}^2\), and if \(\omega\in \Lambda^2V^*\) is defined by
\[
    \omega(v,w):=-\frac{\innp{\iu\,\clm(v\wedge w)\psi}{\psi}}{\abs{\psi}^2}
    \qquad\text{for }v,w\in V,
\]
then \(\eta=C\,\omega\). Equivalently, there exists an orthonormal basis \(e_1,\dots,e_n\) of \(V\) such that \(\omega=\sum_{j=1}^m e^{2j-1}\wedge e^{2j}\) and \(\iu\,\clm(e_{2j-1})\clm(e_{2j})\psi=-\psi\) for every \(j=1,\dots,m\). In particular, \(\iu\,\clm(\eta)\psi=-mC\,\psi\).
\end{lemma}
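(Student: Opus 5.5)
The plan is to diagonalize \(\eta\) and then use the fact that the Clifford images of the mutually orthogonal planes commute. By the normal form for skew-symmetric endomorphisms, there is an orthonormal basis \(e_1,\dots,e_n\) of \(V\) with
\[
\eta=\sum_{j=1}^m \lambda_j\, e^{2j-1}\wedge e^{2j},
\]
and we may take \(\lambda_j\ge 0\), flipping \(e_{2j}\) where necessary. Set \(A_j:=\iu\,\clm(e_{2j-1})\clm(e_{2j})\). Each \(A_j\) is self-adjoint, since \((\clm(e_a)\clm(e_b))^*=\clm(e_b)\clm(e_a)=-\clm(e_a)\clm(e_b)\), and satisfies \(A_j^2=1\). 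Moreover the \(A_j\) pairwise commute, because each is a product of two Clifford generators from disjoint index sets. Hence \(\iu\,\clm(\eta)=\sum_j\lambda_jA_j\) is a sum of commuting self-adjoint involutions, so its spectrum lies in \(\{\sum_j\pm\lambda_j\}\) and \(\abs{\iu\clm(\eta)}_\op\le\sum_j\lambda_j\). (The sign convention for \(\clm\) of a 2-form, \(e^a\wedge e^b\mapsto\clm(e_a)\clm(e_b)\), is the one implicit in the statement.)

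The key linear-algebra input is the standard fact that the comass of \(\eta\) is \(\max_j\lambda_j\); only the easy direction \(\lambda_j=\eta(e_{2j-1},e_{2j})\le\abs{\eta}_\ast\) is needed. This gives \(\sum_j\lambda_j\le m\abs{\eta}_\ast\), which is the inequality.

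For the equality case, suppose \(\innp{\iu\clm(\eta)\psi}{\psi}=-mC\abs{\psi}^2\). Then
\[
-mC\abs{\psi}^2=\sum_j\lambda_j\innp{A_j\psi}{\psi}\ge-\sum_j\lambda_j\abs{\psi}^2\ge -mC\abs{\psi}^2 ,
\]
using \(\lambda_j\le\abs{\eta}_\ast\le C\). So every inequality is an equality. Since \(C>0\), this forces \(\lambda_j=C\) for all \(j\), and \(\innp{A_j\psi}{\psi}=-\abs{\psi}^2\) for each \(j\). Because \(A_j\) is a self-adjoint involution, this gives \(A_j\psi=-\psi\), which is the asserted eigen-relation. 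It follows that \(\eta=C\sum_je^{2j-1}\wedge e^{2j}\) and \(\iu\clm(\eta)\psi=-mC\psi\).

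The remaining step, which I expect to be the only real bookkeeping obstacle, is to identify \(\omega\) with \(\sum_je^{2j-1}\wedge e^{2j}\). This amounts to computing \(\innp{\iu\clm(e_a)\clm(e_b)\psi}{\psi}\) for all \(a<b\).
\begin{itemize}
\item For a pair \(\{a,b\}=\{2j-1,2j\}\), this value is \(-\abs{\psi}^2\), so \(\omega(e_{2j-1},e_{2j})=1\).
\item For any other pair, take \(a\) in block \(j\) and \(b\) not in block \(j\) (possible, since \(\{a,b\}\) is not itself a block, up to swapping \(a\) and \(b\)). Here \(b\) is either the unpaired index \(n\) when \(n\) is odd, or lies in a different block. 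Then \(X:=\iu\clm(e_a)\clm(e_b)\) is self-adjoint and anticommutes with \(A_j\): exactly one of its factors anticommutes with \(A_j\), namely \(\clm(e_a)\), while \(\clm(e_b)\) commutes with \(A_j\). Consequently
\[
\innp{X\psi}{\psi}=-\innp{XA_j\psi}{\psi}=\innp{A_jX\psi}{\psi}=\innp{X\psi}{A_j\psi}=-\innp{X\psi}{\psi},
\]
so the pairing vanishes.
\end{itemize}
Therefore \(\omega=\sum_je^{2j-1}\wedge e^{2j}\) and \(\eta=C\omega\). This also establishes the equivalent basis formulation of the statement.
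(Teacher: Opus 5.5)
Your proposal is correct and follows essentially the same route as the paper's proof: both use the normal form of $\eta$, the self-adjoint involutions $A_j$ bounded by the triangle inequality, and the same equality chain forcing $\lambda_j = C$ and $A_j\psi=-\psi$. Both also use the same anticommutation argument to show that the off-block entries of $\omega$ vanish. The only differences are cosmetic: you normalize $\lambda_j\ge 0$ at the start rather than after the equality analysis, and you note that the $A_j$ commute, which is harmless but not needed for the bound.
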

\begin{proof}
Choose an orthonormal basis \(e_1,\dots,e_n\) of \(V\) such that
\[
    \eta=\sum_{j=1}^m \lambda_j\, e^{2j-1}\wedge e^{2j}
\]
for suitable \(\lambda_1,\dots,\lambda_m\in \R\).
Set \(C_0:=\abs{\eta}_{\ast}=\max_{1\le j\le m}\abs{\lambda_j}\) and \(A_j:=\iu\,\clm(e_{2j-1})\clm(e_{2j})\).
Then \(\iu\,\clm(\eta)=\sum_{j=1}^m \lambda_j A_j\).
For each \(j\), the operator \(A_j\) is self-adjoint and satisfies \(A_j^2=\id\), hence has operator norm \(1\).
Therefore
\[
    \abs{\iu\,\clm(\eta)}_\op
    \le \sum_{j=1}^m \abs{\lambda_j}
    \le m \max_{1\le j\le m}\abs{\lambda_j}
    = mC_0.
\]
Now assume that \(C>0\), \(\abs{\eta}_{\ast}\le C\), and that \(0\neq \psi\in E\) satisfies
\[
    \innp{\iu\,\clm(\eta)\psi}{\psi}=-mC\,\abs{\psi}^2.
\]
Then
\[
    -mC\,\abs{\psi}^2
    =
    \sum_{j=1}^m \lambda_j\,\innp{A_j\psi}{\psi}
    \ge
    -\sum_{j=1}^m \abs{\lambda_j}\,\abs{\psi}^2
    \ge
    -mC\,\abs{\psi}^2.
\]
Hence equality holds throughout.
In particular, \(\abs{\lambda_j}=C\) for every \(j\), and \(\lambda_j\,\innp{A_j\psi}{\psi}=-\abs{\lambda_j}\,\abs{\psi}^2\) for every \(j=1,\dots,m\).
If \(\varepsilon_j:=\lambda_j/C\in\{\pm 1\}\), then \(\innp{\varepsilon_j A_j\psi}{\psi}=-\abs{\psi}^2\).
Since \(\varepsilon_j A_j\) is self-adjoint and has operator norm \(1\), this forces \(\varepsilon_j A_j\psi=-\psi\).
After replacing \(e_{2j}\) by \(-e_{2j}\) whenever \(\varepsilon_j=-1\), we may therefore assume that \(\lambda_j=C\) and \(A_j\psi=-\psi\) for every \(j\).
Define \(\omega\in \Lambda^2V^*\) by \(\omega(v,w):=-\abs{\psi}^{-2}\innp{\iu\,\clm(v\wedge w)\psi}{\psi}\).
Then \(\omega(e_{2j-1},e_{2j})=-\abs{\psi}^{-2}\innp{A_j\psi}{\psi}=1\) for every \(j\).
Finally, let \(a<b\) be a pair not equal to any of the pairs \((2j-1,2j)\), and set \(B:=\iu\,\clm(e_a)\clm(e_b)\).
Then \(B\) anticommutes with some \(A_j\), namely one for which \(\{a,b\}\) meets \(\{2j-1,2j\}\) in exactly one index.
Using that \(A_j\) is self-adjoint and \(A_j\psi=-\psi\), we obtain \(\innp{B\psi}{\psi}=\innp{A_j B\psi}{A_j\psi}=-\innp{B\psi}{\psi}\). Therefore \(\omega(e_a,e_b)=0\).
Hence \(\omega=\sum_{j=1}^m e^{2j-1}\wedge e^{2j}\), so \(\eta=C\,\omega\), and \(\iu\,\clm(\eta)\psi=\sum_{j=1}^m C A_j\psi=-mC\,\psi\). \qedhere
\end{proof}

\subsection{Even-dimensional case}
Next, we prove Theorem~\ref{thm:general_comass_comparison intro}, which we restate for convenience.

\begin{theorem}\label{thm:general_comass_comparison}
    Let \(M^{2m}\) be an even-dimensional closed oriented connected manifold and let \(c \in \HZ^2(M;\Z)\) such that \(c \equiv w_2(\T M) \mod 2\) and 
    \[\pair{[M]}{\eu^{c/2}\Ahat(\T M)} \neq 0.\]
    Let \(c_{\R}\in \HZ^2(M;\R)\) denote the image of \(c\) under the natural map \(\HZ^2(M;\Z)\to \HZ^2(M;\R)\).
    Then for every Riemannian metric \(g\) on \(M\), we have
    \[
        \min_{M} \scal_g \leq 4 m \pi \comassnorm{c_{\R}}.
    \]
    Moreover, if equality holds and \(c_{\R}\neq 0\), then \((M,g)\) is Kähler--Einstein with Kähler form \(\omega\) satisfying
    \(
        [\omega]=\frac{c_{\R}}{\comassnorm{c_{\R}}}
    \)
    in \(\HZ^2(M;\R)\).
\end{theorem}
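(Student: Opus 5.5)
The approach is to run the \(\Spinc\) Lichnerowicz argument against a near-comass-minimizing representative, and then pass to a limit in the equality case.

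\textbf{Inequality.} Since \(c\equiv w_2(\T M)\bmod 2\), there is a \(\Spinc\) structure on \(M\) whose determinant line bundle \(L\) has \(\chernclass_1(L)=c\). Fix \(\varepsilon>0\) and choose a smooth closed \(2\)-form \(\eta\) with \([\eta]=c_{\R}\) and \(\Comass{\eta}\le\comassnorm{c_{\R}}+\varepsilon\). Because \([2\pi\iu\,\eta]\) represents \(2\pi\iu\,c_{\R}\), there is a unitary connection \(A\) on \(L\) with \(\LineCurv_A=2\pi\iu\,\eta\); this uses the standard fact that any closed form in the right class is the curvature of some connection, namely \(A_0\) plus a suitable imaginary \(1\)-form. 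By \eqref{equation:spinc-index} and the hypothesis, \(\ind(\Dirac_A^+)\neq0\), so there is a nonzero \(\psi\) with \(\Dirac_A\psi=0\). We integrate \eqref{equation:spinc-lichnerowicz} against \(\psi\). The term \(\tfrac12\clm(\LineCurv_A)=\pi\,\iu\,\clm(\eta)\) is bounded pointwise below by \(-\pi m\Comass{\eta}\) using \cref{lem:general-clifford}. This gives
\[
0=\norm{\nabla_A\psi}^2+\int_M\Bigl(\tfrac{\scal_g}{4}\abs{\psi}^2+\pi\innp{\iu\,\clm(\eta)\psi}{\psi}\Bigr)
\ge\int_M\Bigl(\tfrac{\min\scal_g}{4}-\pi m(\comassnorm{c_{\R}}+\varepsilon)\Bigr)\abs{\psi}^2 .
\]
Letting \(\varepsilon\to0\) yields \(\min\scal_g\le 4m\pi\comassnorm{c_{\R}}\).

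\textbf{Equality.} Set \(C:=\comassnorm{c_{\R}}>0\) and assume \(\min\scal_g=4m\pi C\). The difficulty is that the infimum defining \(C\) need not be attained by a smooth form, so I would pass to a limit. Take \(\eta_k=\eta_0+\D\alpha_k\) with \(\Comass{\eta_k}\le C+1/k\), where \(\eta_0\) is the harmonic representative and the \(\alpha_k\) are coexact. Let \(A_k=A_0+2\pi\iu\,\alpha_k\), and choose harmonic spinors \(\psi_k\) normalized by \(\norm{\psi_k}_{L^2}=1\). The displayed inequality gives \(\norm{\nabla_{A_k}\psi_k}^2\le\pi m/k\), together with
\[
\int_M\bigl(\scal_g/4-\pi mC\bigr)\abs{\psi_k}^2\le\pi m/k \quad\text{and}\quad \int_M\bigl(\pi\innp{\iu\clm(\eta_k)\psi_k}{\psi_k}+\pi m C\abs{\psi_k}^2\bigr)\le \pi m/k .
\]
Because \(\D(\nabla_{A}\)-gauge) freedom lets us absorb \(\alpha_k\) into the spinor, we set \(\phi_k:=\eu^{-2\pi\iu f_k}\psi_k\) after Hodge-decomposing, or more simply work with gauge-invariant quantities. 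Since \(\abs{\psi_k}\) and the bilinear forms \(\omega_k(v,w):=-\innp{\iu\clm(v\wedge w)\psi_k}{\psi_k}\) satisfy \(\abs{\nabla\omega_k}\le 2\abs{\psi_k}\abs{\nabla_{A_k}\psi_k}\), Kato's inequality gives \(\norm{\nabla\abs{\psi_k}}_{L^2}\to0\). Hence \(\abs{\psi_k}\to\) const \(=\vol^{-1/2}\) in \(L^2\), and the \(\omega_k\) are bounded in \(W^{1,1}\). I would also like uniform \(C^0\) bounds, for instance from Moser iteration on \(\Delta\abs{\psi}^2\) using the bounded curvature term \(\Comass{\eta_k}\le C+1\). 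With these, the \(\omega_k\) converge in \(L^2\) to a parallel form \(\omega\), and \(\eta_k\rightharpoonup\eta_\infty\) weakly-\(*\) in \(L^\infty\) with \([\eta_\infty]=c_{\R}\) and \(\Comass{\eta_\infty}\le C\). The pointwise near-equality in \cref{lem:general-clifford}, in the form of a stability version, should then yield \(\eta_\infty=C\,\omega/\abs{\psi}^2\) almost everywhere, so that \(\eta_\infty\) is a constant multiple of a parallel form. By the lemma, \(\omega\) is pointwise of the form \(\sum e^{2j-1}\wedge e^{2j}\) up to normalization, so it is a parallel almost complex Kähler form. The same limit forces \(\scal_g\equiv4m\pi C\) and \(\iu\clm(\LineCurv)\psi=-2\pi mC\psi\) with \(\LineCurv=2\pi\iu C\omega\). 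Closedness of \(\eta_\infty\) then gives \([\omega]=c_{\R}/C\).

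\textbf{Kähler--Einstein.} On a Kähler manifold, a parallel spinor for a \(\Spinc\) structure has curvature of the determinant line equal to \(\iu\rho\), up to sign, where \(\rho\) is the Ricci form. This follows from the curvature identity \(\sum_j e_j\cdot R(e_j,X)\psi=\ldots\), i.e.\ \(\operatorname{Ric}(X)\cdot\psi=\iu(X\lrcorner F)\cdot\psi\) for parallel \(\psi\). Combined with \(\LineCurv=2\pi\iu C\omega\) this gives \(\rho=2\pi C\omega\), so \(g\) is Kähler--Einstein. To get the parallel spinor, I would extract a limit of the \(\psi_k\) in a gauge where the \(A_k\) converge, which is possible since the \(\alpha_k\) are bounded in \(W^{1,p}\) by elliptic estimates on coexact forms with \(L^\infty\)-bounded \(\D\alpha_k\).

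The main obstacle is this limiting step, in particular securing enough compactness (uniform \(C^0\) bounds on \(\psi_k\) and convergence of the gauged connections) to pass the pointwise equality case of \cref{lem:general-clifford} to the limit.
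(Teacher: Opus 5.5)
Your inequality argument is the paper's. The ingredients you end up invoking for equality are also the paper's: coexact \(\alpha_k\) with uniform \(\SobolevW^{1,p}\) bounds, a limiting parallel spinor, the rigidity part of \cref{lem:general-clifford}, and the curvature identity for parallel \(\Spinc\) spinors.

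The gap is in how you reach the K\"ahler structure. You try to obtain \(\eta_\infty\) as a constant multiple of \(\omega=\lim\omega_k\) gauge-invariantly, through an unstated ``stability version'' of \cref{lem:general-clifford}. But that lemma is an exact-equality statement about a single spinor. Your \(\omega_k\) is induced by \(\psi_k\), which satisfies only an integrated near-equality. So neither that step nor your claim that \(\omega\) has K\"ahler normal form ``by the lemma'' follows as written. Moreover, the identity \(\iu\,\clm(\LineCurv)\psi=-2\pi mC\psi\) you state next refers to a spinor you have not yet constructed. The gauge-absorption remark is also wrong: a gauge transformation changes \(A\) only by a closed imaginary \(1\)-form, and your \(\alpha_k\) are coexact, so nothing can be absorbed. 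Nothing needs to be, since this Coulomb gauge is already one in which the \(A_k\) converge.

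The fix is to construct the limit spinor first, as the paper does. Since \(\D\alpha_k=\eta_k-\eta_0\) is bounded in \(\Lp^\infty\), the Hodge estimate bounds \(\alpha_k\) in \(\SobolevW^{1,p}\) for all \(p<\infty\). For \(p>2m\), Morrey gives \(\alpha_k\to\alpha_\infty\) in \(\Ct^0\), so \(\psi_k\) is bounded in \(\SobolevH^1\). Using \(\nabla_{A_\infty}\psi_k\to0\) in \(\Lp^2\), one gets \(\psi_k\to\psi_\infty\neq0\) in \(\SobolevH^1\) with \(\nabla_{A_\infty}\psi_\infty=0\) and \(\abs{\psi_\infty}\) constant.

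Then \(\omega(X,Y):=-\innp{\iu\,\clm(X\wedge Y)\psi_\infty}{\psi_\infty}\) is parallel. Pass to the limit in your integrated inequality, using weak-\(*\) convergence of \(\eta_k\) against strong \(\Lp^1\) convergence of \(\psi_k\otimes\psi_k^{*}\) and the pointwise bound \(\pi\innp{\iu\,\clm(\eta_k)\psi_k}{\psi_k}+\pi mC\abs{\psi_k}^2\ge-\tfrac{\pi m}{k}\abs{\psi_k}^2\). (Alternatively, as the paper does, apply the Lichnerowicz formula directly to \(\psi_\infty\) and \(A_\infty\).) This gives \(\iu\,\clm(\eta_\infty)\psi_\infty=-mC\psi_\infty\) a.e., and weak-\(*\) lower semicontinuity gives \(\Comass{\eta_\infty}\le C\). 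These are exactly the hypotheses of the rigidity statement in \cref{lem:general-clifford}, so \(\eta_\infty=C\,\omega\) a.e. No stability lemma or Moser iteration is needed.

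You also need regularity before using the curvature identity pointwise, which the paper gets by bootstrapping. Since \(\omega\) is parallel, \(\eta_\infty\) is smooth. Elliptic regularity for \((\Dadj\D+\D\Dadj)\alpha_\infty=\Dadj(\eta_\infty-\eta_0)\) then makes \(A_\infty\) smooth, and hence \(\psi_\infty\) is smooth.

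Your gauge-invariant route to the K\"ahler structure can in fact be completed, and the missing ingredient is a duality fact rather than a stability lemma. For a spinor \(\psi\), the induced \(2\)-form has comass at most \(\abs{\psi}^2\) and mass at most \(m\abs{\psi}^2\). Also, the only form of comass \(\le C\) whose pairing with \(\sum_j e^{2j-1}\wedge e^{2j}\) equals \(mC\) is \(C\sum_j e^{2j-1}\wedge e^{2j}\). Even so, the Einstein condition still requires the parallel spinor.
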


\begin{proof}%
Fix a Riemannian metric \(g\) on \(M\), and set \(\sigma:=\min_M \scal_g\).
We will show that \(\sigma\le 4m\pi\,\comassnorm{c_{\R}}\).

Since \(c\equiv w_2(\T M)\mod 2\), there exists a \(\Spinc\) structure on \(M\) whose determinant line bundle \(L\) satisfies \(\chernclass_1(L)=c\).
Let \(S=S^+\oplus S^-\) be the associated complex spinor bundle.
Let \(c_{\R}\in \HZ^2(M;\R)\) denote the image of \(c\) under the natural map \(\HZ^2(M;\Z)\to \HZ^2(M;\R)\).

Fix \(\varepsilon>0\).
By the definition of \(\comassnorm{c_{\R}}\), there exists a smooth closed real \(2\)-form \(\eta\) with
\[
    [\eta]=c_{\R},
    \qquad
    \Comass{\eta}\le \comassnorm{c_{\R}}+\varepsilon.
\]

Choose a unitary connection \(A\) on \(L\) with curvature
\[
    \LineCurv_A=2\pi \iu\,\eta.
\]
Indeed, if \(A_0\) is any unitary connection on \(L\), then \(\frac{\LineCurv_{A_0}}{2\pi \iu}\) is a closed representative of \(\chernclass_1(L)=c\), so \(\eta-\frac{\LineCurv_{A_0}}{2\pi \iu}=\D\alpha\) for some real \(1\)-form \(\alpha\), and \(A:=A_0+2\pi \iu\,\alpha\) has the required curvature.

Let \(\Dirac_A^+:\Ct^\infty(M,S^+)\to \Ct^\infty(M,S^-)\) be the chiral \(\Spinc\) Dirac operator, and let \(\Dirac_A:\Ct^\infty(M,S)\to \Ct^\infty(M,S)\) be the full Dirac operator.
By the Atiyah--Singer index theorem,
\[
    \ind(\Dirac_A^+)
    =
    \pair{[M]}{\eu^{\chernclass_1(L)/2}\Ahat(\T M)}
    =
    \pair{[M]}{\eu^{c/2}\Ahat(\T M)}
    \neq 0.
\]
Hence \(\ker(\Dirac_A)\neq 0\).
Choose
\[
    0\neq \psi\in \Ct^\infty(M,S)
    \qquad\text{with}\qquad
    \Dirac_A\psi=0.
\]

The \(\Spinc\) Lichnerowicz formula gives
\[
    \Dirac_A^2
    =
    \nabla_A^*\nabla_A+\frac{\scal_g}{4}+\frac12\,\clm(\LineCurv_A).
\]
Integrating by parts yields
\[
    0
    =
    \norm{\nabla_A\psi}_{\Lp^2}^2
    +
    \int_M
    \left(
        \frac{\scal_g}{4}\abs{\psi}^2
        +
        \frac12\innp{\clm(\LineCurv_A)\psi}{\psi}
    \right)\,\dV_g.
\]
Since \(\LineCurv_A=2\pi \iu\,\eta\), we have \(\frac12\,\clm(\LineCurv_A)=\pi\iu\,\clm(\eta)\).
By \cref{lem:general-clifford}, applied to \(V=\T_pM\) and \(E=S_p\), at each point \(p\in M\),
\[
    \frac12\innp{\clm(\LineCurv_A)\psi}{\psi}
    =
    \pi\,\innp{\iu\,\clm(\eta)\psi}{\psi}
    \ge -\pi\,\abs{\iu\,\clm(\eta_p)}_\op\,\abs{\psi}^2
    \ge -\pi m\,\comassp{\eta}{p}\,\abs{\psi}^2.
\]
Therefore
\[
    0
    \ge
    \norm{\nabla_A\psi}_{\Lp^2}^2
    +
    \int_M
    \left(
        \frac{\scal_g}{4}
        -
        \pi m\,\Comass{\eta}
    \right)
    \abs{\psi}^2\,\dV_g.
\]
Since \(\scal_g\ge \sigma\), we obtain
\[
    0
    \ge
    \left(
        \frac{\sigma}{4}
        -
        \pi m\,\Comass{\eta}
    \right)
    \int_M \abs{\psi}^2\,\dV_g.
\]
Because \(\psi\neq 0\), the integral is positive, so
\[
    \sigma\le 4m\pi\,\Comass{\eta}
    \le 4m\pi\bigl(\comassnorm{c_{\R}}+\varepsilon\bigr).
\]
This holds for every \(\varepsilon>0\).
Letting \(\varepsilon\downarrow 0\) gives \(\sigma\le 4m\pi\,\comassnorm{c_{\R}}\).
Since \(\sigma=\min_M \scal_g\), this proves the asserted inequality.

Assume now that \(\min_M \scal_g = 4m\pi\,\comassnorm{c_{\R}}\) and \(c_{\R}\neq 0\).
Set \(C:=\comassnorm{c_{\R}}\).
Since \(c_{\R}\neq 0\) and \(\comassnorm{\cdot}\) is a norm on \(\HZ^2(M;\R)\), we have \(C>0\).

\begin{lemma}\label{lem:general-equality-parallel-spinor}
Suppose that we are in the setting of \cref{thm:general_comass_comparison} and that
\[
    \min_M \scal_g = 4m\pi\,\comassnorm{c_{\R}}.
\]
Set \(C:=\comassnorm{c_{\R}}\).
Then there exist a closed real \(\Lp^\infty\) \(2\)-form \(\eta_\infty\), a unitary connection \(A_\infty\) on \(L\) of class \(\bigcap_{p<\infty}\SobolevW^{1,p}\) with
\[
    \LineCurv_{A_\infty}=2\pi \iu\,\eta_\infty,
\]
and a nonzero spinor \(\psi_\infty\in \SobolevH^1(S)\) such that
\[
    \Comass{\eta_\infty}\le C
    \qquad\text{and}\qquad
    \nabla_{A_\infty}\psi_\infty=0.
\]
Moreover, \(\abs{\psi_\infty}\) is constant and nonzero, and
\[
    \scal_g\equiv 4m\pi C,
    \qquad
    \iu\,\clm(\eta_\infty)\psi_\infty=-mC\,\psi_\infty
\]
almost everywhere.
\end{lemma}

\begin{proof}
Let \(C:=\comassnorm{c_{\R}}\) and \(\sigma:=\min_M \scal_g = 4m\pi C\).
Let \(c_{\R}\in \HZ^2(M;\R)\) denote the image of \(c\) under the natural map \(\HZ^2(M;\Z)\to \HZ^2(M;\R)\).
Fix a smooth unitary connection \(A_0\) on \(L\) with \(\LineCurv_{A_0}=2\pi \iu\,\eta_0\) for some \(\eta_0\in \Dforms^2(M)\).
Choose a sequence \(\varepsilon_j\downarrow 0\).
For each \(j\), choose a smooth closed real \(2\)-form \(\eta_j\) with \([\eta_j]=c_{\R}\) and \(\Comass{\eta_j}\le C+\varepsilon_j\).
Since \(\eta_j\) and \(\eta_0\) represent the same class in \(\HZ^2(M;\R)\), the difference \(\eta_j-\eta_0\) is exact.
Let \(a_j\) be the unique real \(1\)-form satisfying
\[
    \D a_j=\eta_j-\eta_0,
    \qquad
    \Dadj a_j=0,
    \qquad
    a_j\perp \mathcal H^1(M),
\]
where \(\mathcal H^1(M)\) denotes the space of harmonic \(1\)-forms.
Define a unitary connection \(A_j\) on \(L\) by \(A_j=A_0+2\pi \iu\,a_j\).
Then \(\LineCurv_{A_j}=2\pi \iu\,\eta_j\).
Choose a spinor \(\psi_j\in \Ct^\infty(M,S)\) such that
\[
    \Dirac_{A_j}\psi_j=0,
    \qquad
    \norm{\psi_j}_{\Lp^2}=1.
\]
As in the proof of the inequality, we then have
\[
    0
    \ge
    \norm{\nabla_{A_j}\psi_j}_{\Lp^2}^2
    +
    \left(
        \frac{\sigma}{4}
        -
        \pi m\,\Comass{\eta_j}
    \right)
    \norm{\psi_j}_{\Lp^2}^2.
\]
Since \(\sigma=4m\pi C\) and \(\Comass{\eta_j}\le C+\varepsilon_j\), it follows that \(\norm{\nabla_{A_j}\psi_j}_{\Lp^2}^2 \le \pi m\,\varepsilon_j\to 0\).

For every \(p<\infty\), the Hodge estimate gives a constant \(K_p\) such that every \(1\)-form \(a\) orthogonal to \(\mathcal H^1(M)\) satisfies
\[
    \norm{a}_{\SobolevW^{1,p}}
    \le
    K_p\bigl(
        \norm{\D a}_{\Lp^p}
        +
        \norm{\Dadj a}_{\Lp^p}
    \bigr).
\]
Applying this to \(a_j\), we obtain \(\norm{a_j}_{\SobolevW^{1,p}} \le K_p\,\norm{\eta_j-\eta_0}_{\Lp^p}\).
Since the comass norm and any fixed bundle norm on \(\Lambda^2T^*M\) are equivalent, the bound \(\Comass{\eta_j}\le C+\varepsilon_j\) gives a uniform \(\Lp^\infty\)-bound on \(\eta_j\), hence a uniform \(\Lp^p\)-bound on \(\eta_j-\eta_0\).
Therefore \((a_j)\) is uniformly bounded in \(\SobolevW^{1,p}\) for every \(p<\infty\).
Choose \(p>2m\).
After passing to a subsequence we may assume that
\[
    a_j\rightharpoonup a_\infty \text{ weakly in } \SobolevW^{1,p}.
\]
Fix \(\alpha\in \bigl(0,1-\frac{2m}{p}\bigr)\).
The Morrey embedding
\[
    \SobolevW^{1,p}(M)\hookrightarrow \Ct^{0,\alpha}(M)
\]
is compact, so after passing to a further subsequence we also have
\[
    a_j\to a_\infty \text{ in } \Ct^{0,\alpha}
\]
after relabeling.
Set \(A_\infty:=A_0+2\pi \iu\,a_\infty\).
The uniform \(\Lp^\infty\)-bound on \(\eta_j\) implies, after passing to the same subsequence, that there exists a real \(2\)-form \(\eta_\infty\in \Lp^\infty\) such that \(\eta_j\rightharpoonup \eta_\infty\) in the weak-\(\ast\) topology on \(\Lp^\infty\), viewing \(\Lp^\infty\) as the dual space of \(\Lp^1\).
Since \(\D a_j=\eta_j-\eta_0\) and \(a_j\rightharpoonup a_\infty\) weakly in \(\SobolevW^{1,p}\), we have \(\D a_j\rightharpoonup \D a_\infty\) weakly in \(\Lp^p\), hence in the sense of distributions.
Therefore \(\eta_\infty=\eta_0+\D a_\infty\) distributionally.
In particular, \(\eta_\infty\) is closed and \(\LineCurv_{A_\infty}=2\pi \iu\,\eta_\infty\).

Because \(\nabla_{A_j}-\nabla_{A_0}\) is a zeroth-order operator whose coefficients are linear in \(a_j\), the uniform \(\Ct^0\)-bound on \(a_j\), the normalization \(\norm{\psi_j}_{\Lp^2}=1\), and the estimate above give a uniform \(\SobolevH^1\)-bound on \(\psi_j\).
Passing to a further subsequence, we may assume \(\psi_j\to \psi_\infty\) in \(\Lp^2\).
In particular, \(\norm{\psi_\infty}_{\Lp^2}=1\), so \(\psi_\infty\neq 0\).

Moreover, \(\nabla_{A_\infty}\psi_j = \nabla_{A_j}\psi_j + \bigl(\nabla_{A_\infty}-\nabla_{A_j}\bigr)\psi_j\).
The first term tends to \(0\) in \(\Lp^2\) by the almost-parallel estimate, and the second term tends to \(0\) in \(\Lp^2\) because \(A_j\to A_\infty\) in \(\Ct^0\) while \(\norm{\psi_j}_{\Lp^2}=1\).
Hence \(\nabla_{A_\infty}\psi_j\to 0\) in \(\Lp^2\).
Since \(\nabla_{A_\infty}-\nabla_{A_0}\) is a zeroth-order operator with bounded coefficients, there exists a constant \(B>0\) such that for all \(j\) and \(k\),
\[
    \norm{\nabla_{A_0}(\psi_j-\psi_k)}_{\Lp^2}
    \le
    \norm{\nabla_{A_\infty}(\psi_j-\psi_k)}_{\Lp^2}
    +
    B\,\norm{\psi_j-\psi_k}_{\Lp^2}.
\]
Hence
\[
    \norm{\nabla_{A_0}(\psi_j-\psi_k)}_{\Lp^2}
    \le
    \norm{\nabla_{A_\infty}\psi_j}_{\Lp^2}
    +
    \norm{\nabla_{A_\infty}\psi_k}_{\Lp^2}
    +
    B\,\norm{\psi_j-\psi_k}_{\Lp^2}\to 0.
\]
Therefore \((\psi_j)\) is Cauchy in \(\SobolevH^1\), so \(\psi_j\to \psi_\infty\) in \(\SobolevH^1\).
We conclude that \(\nabla_{A_\infty}\psi_\infty=0\).
Finally, since the \(\Lp^\infty\)-comass norm is the dual norm to the \(\Lp^1\)-mass norm, it is weak-* lower semicontinuous on \(\Lp^\infty\).
Hence
\[
    \Comass{\eta_\infty}\le \liminf_{j\to\infty}\Comass{\eta_j}\le C.
\]

Since \(A_\infty\) is unitary, we have \(\D\abs{\psi_\infty}^2 = 0\) distributionally.
Hence \(\abs{\psi_\infty}\) is constant.
Since \(\psi_\infty\neq 0\), this constant is positive.
After rescaling \(\psi_\infty\), we may assume that \(\abs{\psi_\infty}\equiv 1\) almost everywhere.
In particular, \(\psi_\infty\in \Lp^\infty\).
Since \(A_\infty\in \SobolevW^{1,p}\subset \Ct^{0,\alpha}\) for some \(p>2m\), the equation \(\nabla_{A_\infty}\psi_\infty=0\) shows that \(\psi_\infty\in \SobolevW^{1,\infty}\).

Since \(\nabla_{A_\infty}\psi_\infty=0\), we also have \(\Dirac_{A_\infty}\psi_\infty=0\).
Hence, exactly as above, the \(\Spinc\) Lichnerowicz formula gives
\[
    0
    =
    \norm{\nabla_{A_\infty}\psi_\infty}_{\Lp^2}^2
    +
    \int_M
    \left(
        \frac{\scal_g}{4}\abs{\psi_\infty}^2
        +
        \pi\innp{\iu\,\clm(\eta_\infty)\psi_\infty}{\psi_\infty}
    \right)\,\dV_g
\]
and therefore
\[
    0
    \ge
    \int_M
    \left(
        \frac{\scal_g}{4}
        -
        \pi m C
    \right)\abs{\psi_\infty}^2\,\dV_g
    \ge 0.
\]
Since \(\abs{\psi_\infty}\) is constant and nonzero, it follows that \(\scal_g\equiv 4m\pi C\) on \(M\).
Returning to the integral identity above, we obtain
\[
    0
    =
    \pi\int_M
    \left(
        mC\,\abs{\psi_\infty}^2
        +
        \innp{\iu\,\clm(\eta_\infty)\psi_\infty}{\psi_\infty}
    \right)\,\dV_g.
\]
The integrand is nonnegative almost everywhere by \cref{lem:general-clifford}, hence it vanishes almost everywhere.
Therefore, by the rigidity statement in \cref{lem:general-clifford},
\[
    \iu\,\clm(\eta_\infty)\psi_\infty=-mC\,\psi_\infty
\]
almost everywhere.
\end{proof}

By \cref{lem:general-equality-parallel-spinor}, there exist a closed real \(\Lp^\infty\) \(2\)-form \(\eta_\infty\), a unitary connection \(A_\infty\) on \(L\), and a nonzero spinor \(\psi_\infty\in \SobolevH^1(S)\) such that
\[
    \LineCurv_{A_\infty}=2\pi \iu\,\eta_\infty,
    \qquad
    \Comass{\eta_\infty}\le C,
    \qquad
    \nabla_{A_\infty}\psi_\infty=0.
\]
The function \(\abs{\psi_\infty}\) is constant and nonzero, and \(\iu\,\clm(\eta_\infty)\psi_\infty=-mC\,\psi_\infty\) almost everywhere.
After rescaling \(\psi_\infty\), we may assume that \(\abs{\psi_\infty}\equiv 1\) almost everywhere.
In particular, \(\psi_\infty\in \Lp^\infty\).
Since \(A_\infty\in \SobolevW^{1,p}\subset \Ct^{0,\alpha}\) for some \(p>2m\), the equation \(\nabla_{A_\infty}\psi_\infty=0\) shows that \(\psi_\infty\in \SobolevW^{1,\infty}\).

Define a real \(2\)-form \(\omega\) by
\[
    \omega(X,Y):=-\innp{\iu\,\clm(X\wedge Y)\psi_\infty}{\psi_\infty}.
\]
Then \(\omega\in \SobolevW^{1,\infty}\), and since Clifford multiplication is parallel and \(\nabla_{A_\infty}\psi_\infty=0\), we have \(\nabla\omega=0\) distributionally and hence \(\omega\) is smooth and parallel.

For almost every \(p\in M\), apply the rigidity statement in \cref{lem:general-clifford} to \(V=\T_pM\), \(E=S_p\), \(\eta=(\eta_\infty)_p\), and \(\psi=\psi_\infty(p)\).
This yields an orthonormal basis \(e_1,\dots,e_{2m}\) of \(\T_pM\) such that
\[
    (\eta_\infty)_p=C\,\omega_p=C\sum_{j=1}^m e^{2j-1}\wedge e^{2j}
\]
and, with \(A_j:=\iu\,\clm(e_{2j-1})\clm(e_{2j})\),
\[
    A_j\psi_\infty(p)=-\psi_\infty(p)
    \qquad\text{for every }j=1,\dots,m.
\]
Therefore \(\eta_\infty=C\,\omega\) almost everywhere.
Since \(\eta_\infty=\eta_0+\D a_\infty\) distributionally and \([\eta_0]=c_{\R}\), it follows that \([\eta_\infty]=c_{\R}\) in \(\HZ^2(M;\R)\).

Since \(\omega\) is smooth and parallel, the form \(\eta_\infty=C\,\omega\) is smooth.
Moreover, \(a_j\to a_\infty\) in \(\Ct^{0,\alpha}\), hence in \(\Lp^2\).
Since each \(a_j\) is orthogonal to \(\mathcal H^1(M)\), the same holds for \(a_\infty\).
Because \(\eta_\infty=\eta_0+\D a_\infty\) and \(\Dadj a_\infty=0\) distributionally, we have \((\Dadj\D+\D\Dadj)a_\infty=\Dadj(\eta_\infty-\eta_0)\) distributionally.
The right-hand side is smooth, so elliptic regularity implies that \(a_\infty\) is smooth.
Hence \(A_\infty=A_0+2\pi \iu\,a_\infty\) is a smooth unitary connection.
Since \(\nabla_{A_\infty}\psi_\infty=0\), standard regularity implies that \(\psi_\infty\) is smooth.
The identities \(\abs{\psi_\infty}\equiv 1\) and \(\iu\,\clm(\eta_\infty)\psi_\infty=-mC\,\psi_\infty\) therefore hold everywhere on \(M\).

Let \(J\) be the skew-adjoint endomorphism defined by
\[
    g(JX,Y)=\omega(X,Y).
\]
Since \(\omega\) is parallel, so is \(J\).
Fix \(p\in M\).
By \cref{lem:general-clifford}, applied to \(V=\T_pM\), \(E=S_p\), \(\eta=(\eta_\infty)_p\), and \(\psi=\psi_\infty(p)\), there exists an orthonormal basis \(e_1,\dots,e_{2m}\) of \(\T_pM\) such that
\[
    (\eta_\infty)_p=C\,\omega_p=C\sum_{j=1}^m e^{2j-1}\wedge e^{2j}
\]
and, with \(A_j:=\iu\,\clm(e_{2j-1})\clm(e_{2j})\),
\[
    A_j\psi_\infty(p)=-\psi_\infty(p)
    \qquad\text{for every }j=1,\dots,m.
\]
Hence \(Je_{2j-1}=e_{2j}\) and \(Je_{2j}=-e_{2j-1}\), so \(J^2=-\id\) at \(p\).
Moreover, \(\clm(Je_{2j-1})\psi_\infty(p)=\clm(e_{2j})\psi_\infty(p)=-\iu\,\clm(e_{2j-1})\psi_\infty(p)\), and similarly \(\clm(Je_{2j})\psi_\infty(p)=-\iu\,\clm(e_{2j})\psi_\infty(p)\).
Since \(p\) was arbitrary, we conclude that \(J^2=-\id\) on \(M\) and \(\clm(JX)\psi_\infty=-\iu\,\clm(X)\psi_\infty\) for every vector field \(X\).
Therefore \((M,g,J)\) is Kähler, with Kähler form \(\omega\).
Since \(\omega(X,Y)=g(JX,Y)\), we have \(\ins_X\omega=(JX)^\flat\), and thus
\[
    \clm(\ins_X\eta_\infty)\psi_\infty
    =
    C\,\clm(\ins_X\omega)\psi_\infty
    =
    C\,\clm(JX)\psi_\infty
    =
    -\iu\,C\,\clm(X)\psi_\infty
\]
for every vector field \(X\).

Since \(\psi_\infty\) is parallel, contracting the curvature identity (see~e.g.~\cite[Lemma~3.1]{moroianu-parallel}) gives
\[
    0
    =
    \frac12\,\clm(\Ric(X))\psi_\infty
    -
    \frac12\,\clm(\ins_X\LineCurv_{A_\infty})\psi_\infty
\]
for every vector field \(X\).
Since \(\LineCurv_{A_\infty}=2\pi\iu\,\eta_\infty\), this becomes
\[
    0
    =
    \frac12\,\clm(\Ric(X))\psi_\infty
    -
    \pi\iu\,\clm(\ins_X\eta_\infty)\psi_\infty
    =
    \frac12\,\clm(\Ric(X)-2\pi C\,X)\psi_\infty
\]
for every vector field \(X\).
Taking pointwise norms and using \(\abs{\psi_\infty}\equiv 1\), we obtain \(\abs{\Ric(X)-2\pi C\,X}=0\) for every vector field \(X\).
Hence \(\Ric(X)=2\pi C\,X\) for every \(X\).
Therefore \(\Ric=2\pi C\,g\), and \(g\) is Einstein.
Since \(\eta_\infty=C\,\omega\) and \([\eta_\infty]=c_{\R}\), we also have
\[
    [\omega]=\frac{c_{\R}}{C}=\frac{c_{\R}}{\comassnorm{c_{\R}}}
\]
in \(\HZ^2(M;\R)\).
Hence \((M,g,J)\) is Kähler--Einstein with Kähler form \(\omega\) satisfying the asserted cohomological identity.
\end{proof}

\subsection{Odd-dimensional case}
\begin{theorem}\label{thm:general_comass_comparison_odd}
    Let \(M^{2m+1}\) be an odd-dimensional closed connected oriented manifold and let \(c \in \HZ^2(M;\Z)\), \(\xi \in \HZ^1(M;\Z)\) such that \(c \equiv w_2(\T M) \mod 2\) and 
    \[\pair{[M]}{\xi \smile \eu^{c/2} \Ahat(\T M)} \neq 0.\]
    Let \(c_{\R}\in \HZ^2(M;\R)\) denote the image of \(c\) under the natural map \(\HZ^2(M;\Z)\to \HZ^2(M;\R)\).
    Then for every Riemannian metric \(g\) on \(M\), we have
    \[
        \min_{M} \scal_g \leq 4 m \pi \comassnorm{c_{\R}}.
    \]
    Moreover, if equality holds and \(c_{\R}\neq 0\), then there exists a parallel \(2\)-form \(\omega\) on \(M\) with
    \(
        [\omega]=\frac{c_{\R}}{\comassnorm{c_{\R}}}
    \),
    and the universal cover of \((M,g)\) is isometric to \(N\times \R\), where \(N\) is a Kähler--Einstein manifold of real dimension \(2m\). If \(\widetilde\omega\) denotes the lift of \(\omega\) to \(N\times \R\) and \(\omega_N\) the Kähler form of \(N\), then \(\widetilde\omega=\proj_N^*\omega_N\).
\end{theorem}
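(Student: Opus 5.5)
We reduce to the even-dimensional argument by producing a nonzero (almost) harmonic spinor on \(M\) from the index pairing with \(\xi\), via spectral flow. Since \(c\equiv w_2(\T M)\bmod 2\), pick a \(\Spinc\) structure with determinant line \(L\), \(\chernclass_1(L)=c\). Let \(u\colon M\to \Sphere^1\) be a smooth map with \([u^*\D\theta/2\pi]=\xi\). For a connection \(A\) with \(\LineCurv_A=2\pi\iu\,\eta\), where \(\eta\) is closed and \(\Comass{\eta}\le\comassnorm{c_{\R}}+\varepsilon\) (constructed exactly as in the proof of \cref{thm:general_comass_comparison}), consider the path of self-adjoint operators \(\Dirac_{A}+t\,\clm(u^{-1}\D u)\)-type twisting. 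Concretely, consider the family \(\Dirac_{A+ \iu t\,u^*\D\theta}\) for \(t\in[0,1]\), which is a family of \(\Spinc\) Dirac operators on the \emph{same} line bundle \(L\) with flat perturbations. The endpoint \(t=1\) is gauge-equivalent to \(t=0\) via multiplication by \(u\), so the family closes up to a loop. Its spectral flow equals \(\pair{[M]}{\xi\smile\eu^{c/2}\Ahat(\T M)}\) by the standard odd index theorem (equivalently, the index on \(M\times\Sphere^1\) of the suspended operator, with Chern character of the Poincaré-type line bundle contributing \(\xi\smile[\D t]\)). Since this is nonzero, some \(\Dirac_{A_t}\) has nonzero kernel. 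The key point is that \(A_t\) differs from \(A\) by a flat connection, so \(\LineCurv_{A_t}=\LineCurv_A=2\pi\iu\,\eta\). Hence we get \(0\neq\psi\) with \(\Dirac_{A_t}\psi=0\) and the same curvature term.

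With this harmonic spinor the inequality follows verbatim from the even case. Apply the Lichnerowicz formula \eqref{equation:spinc-lichnerowicz} (valid in odd dimensions for the \(\Spinc\) spinor bundle). Then apply \cref{lem:general-clifford} with \(n=2m+1\), so that \(\lfloor n/2\rfloor=m\). Letting \(\varepsilon\to0\) gives \(\min\scal_g\le 4m\pi\comassnorm{c_{\R}}\).

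For equality with \(C=\comassnorm{c_{\R}}>0\), we rerun the proof of \cref{lem:general-equality-parallel-spinor}. The sequences are \(\eta_j\), the Coulomb-gauge \(a_j\), the additional flat parameters \(t_j\in[0,1]\) (which we pass to a convergent subsequence), and the normalized harmonic spinors \(\psi_j\). This yields a parallel spinor \(\psi_\infty\) for a limiting connection \(A_\infty\), together with \(\eta_\infty\in\Lp^\infty\) in class \(c_{\R}\), \(\Comass{\eta_\infty}\le C\), \(\scal_g\equiv4m\pi C\), and \(\iu\,\clm(\eta_\infty)\psi_\infty=-mC\psi_\infty\) a.e. The rigidity part of \cref{lem:general-clifford} now gives at each point an orthonormal basis \(e_1,\dots,e_{2m+1}\) with \(\eta_\infty=C\omega\) and \(\omega=\sum_{j\le m}e^{2j-1}\wedge e^{2j}\), where \(\omega(X,Y):=-\innp{\iu\clm(X\wedge Y)\psi_\infty}{\psi_\infty}\) is parallel. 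As before, regularity bootstraps to smoothness, and \([\omega]=c_{\R}/C\). The kernel of \(\omega\) is a parallel line field spanned by \(e_{2m+1}\). Since \(M\) is oriented and \(\omega^m\ne0\), this line field is oriented, so it is spanned by a parallel unit vector field \(T\). The de Rham decomposition on the universal cover then gives \(\widetilde M\cong N\times\R\), with \(\omega\) lifted to \(\proj_N^*\omega_N\) and \(\omega_N\) a parallel Kähler form on \(N\).

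To get Einstein, use the contracted curvature identity exactly as in the even case. It gives \(\clm(\Ric(X)-2\pi C\,\ins_X\omega^\sharp\text{-term})\psi_\infty=0\). Using \(\clm(\ins_X\omega)\psi_\infty=-\iu\clm(X)\psi_\infty\) for \(X\perp T\) and \(\ins_T\omega=0\), we obtain \(\Ric_N=2\pi C\,g_N\) and \(\Ric(T)=0\). So \(N\) is Kähler–Einstein, and \(\Ric(T)=0\) is automatic from the splitting.

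The main obstacle is the first step: setting up the spectral-flow argument rigorously. Two things must be checked. First, the flat twist by \(u\) keeps the curvature term unchanged. Second, the spectral flow of the loop is computed by the pairing \(\pair{[M]}{\xi\smile\eu^{c/2}\Ahat(\T M)}\), including the correct sign and normalization. We would handle this by passing to the \(\Spinc\) manifold \(M\times\Sphere^1\) with line bundle \(L\otimes\mathcal P^{2}\) or an equivalent construction, and invoking \eqref{equation:spinc-index} there.
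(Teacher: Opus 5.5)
Your outline follows the paper's proof step for step: a flat twist by \(u\) produces a harmonic spinor via spectral flow, then the Lichnerowicz formula and \cref{lem:general-clifford} with \(\lfloor (2m+1)/2\rfloor=m\), the compactness argument with extra parameters \(t_j\), the parallel form \(\omega\) with one-dimensional kernel, de Rham splitting, and the contracted curvature identity. However, your spectral-flow family is off by a factor of \(2\), and as written the key claim fails.

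Since \(A\) is a connection on the determinant line \(L\), the spinor connection only sees \(A/2\); this is where the \(\tfrac12\clm(\LineCurv_A)\) in the Lichnerowicz formula comes from. Hence \(\Dirac_{A+a}=\Dirac_A+\tfrac12\clm(a)\). On the other hand, \(u^{-1}\Dirac_A u=\Dirac_A+\clm(u^{-1}\D u)=\Dirac_{A+2u^{-1}\D u}\).

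Because \(\iu\,u^*\D\theta=u^{-1}\D u\), your endpoint is \(\Dirac_{A+u^{-1}\D u}\), which is only halfway around. Multiplication by \(u\) on \(L\) lifts to the spinor bundle only if \(u\) has a square root, i.e.\ only if \(\xi\in 2\HZ^1(M;\Z)\). In general, \(\Dirac_{A+u^{-1}\D u}\) is instead conjugate to \(\Dirac_A\) twisted by the flat real line bundle determined by \(\xi \bmod 2\).

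So your path is not a loop, and its spectral flow is not the topological pairing. It also need not meet a kernel at all. For example, on \(\Sphere^1\) with a flat base connection of suitable holonomy, the only zero crossing of the full loop occurs at a parameter in \((1,2)\). Nothing in your argument controls the flat part of the base connection \(A\), so this cannot be excluded.

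The fix is to use \(A+2t\,u^{-1}\D u\) for \(t\in[0,1]\), equivalently to run your family over \(t\in[0,2]\). This is exactly the paper's \(A^t=A+4\pi\iu\,t\,\alpha\) with \(\alpha=\frac{1}{2\pi\iu}u^{-1}\D u\). It is also consistent with your own suspension remark: the determinant line on \(M\times\Sphere^1\) must be \(L\otimes\mathcal P^2\) precisely so that the spinors are twisted by \(\mathcal P\), i.e.\ so that the spinor-level gauge transformation is \(u\).

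The curvature is still unchanged along this family. With \(t_j\) taken in the enlarged interval, the compactness and rigidity steps go through as you describe, matching the paper. Your extra observation that the kernel line field is globally spanned by a parallel unit vector field \(T\) (using the orientation of \(M\) and \(\omega^m\neq 0\)) is correct. It is not needed, though, since the splitting is only used on the simply connected universal cover.
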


\begin{proof}
Fix a Riemannian metric \(g\) on \(M\), set \(\sigma:=\min_M \scal_g\), and fix \(\varepsilon>0\).

Since \(c\equiv w_2(\T M)\mod 2\), there exists a \(\Spinc\) structure on \(M\) whose determinant line bundle \(L\) satisfies \(\chernclass_1(L)=c\).
Let \(S\) be the associated complex spinor bundle.
Choose a smooth closed real \(2\)-form \(\eta\) with
\[
    [\eta]=c_{\R},
    \qquad
    \Comass{\eta}\le \comassnorm{c_{\R}}+\varepsilon,
\]
and choose a unitary connection \(A\) on \(L\) with
\[
    \LineCurv_A=2\pi \iu\,\eta,
\]
exactly as in the proof of \cref{thm:general_comass_comparison}.

Choose a smooth map \(u\colon M\to \Sphere^1\) representing \(\xi\), and set
\[
    \alpha:=\frac{1}{2\pi \iu}u^{-1}\D u\in \Dforms^1(M).
\]
Then \(\alpha\) is closed and \([\alpha]\) is the image of \(\xi\) in \(\HZ^1(M;\R)\).
For \(t\in[0,1]\), let
\[
    A^t:=A+4\pi \iu\, t\,\alpha
\]
be the corresponding unitary connection on \(L\).
Write \(A^\bullet:=(A^t)_{t\in[0,1]}\) and \(\Dirac_{A^\bullet}:=(\Dirac_{A^t})_{t\in[0,1]}\).
Since \(\alpha\) is closed, each \(A^t\) has the same curvature as \(A\), namely
\[
    \LineCurv_{A^t}=\LineCurv_A=2\pi \iu\,\eta.
\]
Hence the \(\Spinc\) Lichnerowicz formula is
\[
    \Dirac_{A^t}^2
    =
    \nabla_{A^t}^*\nabla_{A^t}+\frac{\scal_g}{4}+\frac12\,\clm(\LineCurv_A),
\]
exactly as in the even-dimensional case.
By \cref{lem:general-clifford}, applied to \(V=\T_pM\) and \(E=S_p\), since \(\dim M=2m+1\), for every \(p\in M\) and every \(\beta\in \Lambda^2\T_p^*M\),
\[
    \abs{\iu\,\clm(\beta)}_\op\le m\,\comassp{\beta}{p}.
\]
Thus the only new input is the production of a nonzero kernel vector.

Since \(2\pi \iu\,\alpha=u^{-1}\D u\), we have
\[
    A^1=A+2u^{-1}\D u.
\]
Thus \(A^1\) is gauge-equivalent to \(A^0=A\) via the gauge transformation \(u^2\colon L\to L\); on the spinor bundle the induced action is multiplication by \(u\), so \(\Dirac_{A^1}=u^{-1}\Dirac_{A^0}u\).
Hence, by the spectral flow index theorem (see e.g.~\textcites{Getzler:odd}[Corollary 1]{Baer-Ziemke}),
\[
    \operatorname{sf}\bigl(\Dirac_{A^\bullet}\bigr)
    = - \pair{[M]}{\xi \smile \eu^{c/2}\Ahat(\T M)}
    \neq 0,
\]
and so there exists \(t_0\in[0,1]\) and
\(
    0\neq \psi\in \Ct^\infty(M,S)\)
with
\(
    \Dirac_{A^{t_0}}\psi=0.
\)
Set \(B:=A^{t_0}\).

Applying the same Lichnerowicz/comass estimate as in the proof of \cref{thm:general_comass_comparison} to \(\psi\) yields
\[
    0
    \ge
    \norm{\nabla_B\psi}_{\Lp^2}^2
    +
    \int_M
    \left(
        \frac{\scal_g}{4}
        -
        \pi m\,\Comass{\eta}
    \right)
    \abs{\psi}^2\,\dV_g.
\]
Since \(\scal_g\ge \sigma\), we obtain
\[
    0
    \ge
    \left(
        \frac{\sigma}{4}
        -
        \pi m\,\Comass{\eta}
    \right)
    \int_M \abs{\psi}^2\,\dV_g.
\]
Because \(\psi\neq 0\), the integral is positive, so
\[
    \sigma\le 4m\pi\,\Comass{\eta}
    \le 4m\pi\bigl(\comassnorm{c_{\R}}+\varepsilon\bigr).
\]
This holds for every \(\varepsilon>0\).
Letting \(\varepsilon\downarrow 0\) gives
\(
    \min_M \scal_g \le 4m\pi\,\comassnorm{c_{\R}},
\)
as desired.

Assume now that \(\min_M \scal_g = 4m\pi\,\comassnorm{c_{\R}}\) and \(c_{\R}\neq 0\).
Set \(C:=\comassnorm{c_{\R}}\).
Then \(C>0\) and \(\sigma=4m\pi C\).

\begin{lemma}\label{lem:general-equality-parallel-spinor-odd}
Suppose that we are in the setting of \cref{thm:general_comass_comparison_odd} and that
\[
    \min_M \scal_g = 4m\pi\,\comassnorm{c_{\R}}.
\]
Set \(C:=\comassnorm{c_{\R}}\).
Then there exist a closed real \(\Lp^\infty\) \(2\)-form \(\eta_\infty\), a unitary connection \(A_\infty\) on \(L\) of class \(\bigcap_{p<\infty}\SobolevW^{1,p}\) with
\[
    \LineCurv_{A_\infty}=2\pi \iu\,\eta_\infty,
\]
and a nonzero spinor \(\psi_\infty\in \SobolevH^1(S)\) such that
\[
    \Comass{\eta_\infty}\le C
    \qquad\text{and}\qquad
    \nabla_{A_\infty}\psi_\infty=0.
\]
Moreover, \(\abs{\psi_\infty}\) is constant and nonzero, and
\[
    \scal_g\equiv 4m\pi C,
    \qquad
    \iu\,\clm(\eta_\infty)\psi_\infty=-mC\,\psi_\infty
\]
almost everywhere.
\end{lemma}

\begin{proof}
Fix a smooth unitary connection \(A_0\) on \(L\) with \(\LineCurv_{A_0}=2\pi \iu\,\eta_0\) for some \(\eta_0\in \Dforms^2(M)\).
Choose \(\varepsilon_j\downarrow 0\).
For each \(j\), choose a smooth closed real \(2\)-form \(\eta_j\) with \([\eta_j]=c_{\R}\) and \(\Comass{\eta_j}\le C+\varepsilon_j\).
Define \(A_j\) from \(\eta_j\) exactly as in the proof of \cref{lem:general-equality-parallel-spinor}, so that \(\LineCurv_{A_j}=2\pi \iu\,\eta_j\).

Fix \(u\colon M\to \Sphere^1\) representing \(\xi\), and define \(\alpha\) as above.
For \(t\in[0,1]\), set
\[
    A_j^t:=A_j+4\pi \iu\, t\,\alpha.
\]
Write \(A_j^\bullet:=(A_j^t)_{t\in[0,1]}\) and \(\Dirac_{A_j^\bullet}:=(\Dirac_{A_j^t})_{t\in[0,1]}\).
As before,
\[
    \operatorname{sf}\bigl(\Dirac_{A_j^\bullet}\bigr)
    =
    - \pair{[M]}{\xi \smile \eu^{c/2}\Ahat(\T M)}
    \neq 0,
\]
so there exist \(t_j\in[0,1]\) and \(\psi_j\in \Ct^\infty(M,S)\) such that
\[
    \Dirac_{A_j^{t_j}}\psi_j=0,
    \qquad
    \norm{\psi_j}_{\Lp^2}=1.
\]
Set \(B_j:=A_j^{t_j}\).
The Lichnerowicz estimate from the first part gives
\[
    0
    \ge
    \norm{\nabla_{B_j}\psi_j}_{\Lp^2}^2
    +
    \left(
        \frac{\sigma}{4}
        -
        \pi m\,\Comass{\eta_j}
    \right)
    \norm{\psi_j}_{\Lp^2}^2.
\]
Since \(\sigma=4m\pi C\) and \(\Comass{\eta_j}\le C+\varepsilon_j\), it follows that \(\norm{\nabla_{B_j}\psi_j}_{\Lp^2}^2\le \pi m\,\varepsilon_j\to 0\).

As in the proof of \cref{lem:general-equality-parallel-spinor}, the bound \(\Comass{\eta_j}\le C+\varepsilon_j\) yields uniform \(\SobolevW^{1,p}\)-bounds on \(A_j-A_0\) for every \(p<\infty\).
Since
\[
    B_j-A_0=(A_j-A_0)+4\pi \iu\, t_j\,\alpha
\]
and \((t_j)\subset[0,1]\), the sequence \((B_j-A_0)\) is also uniformly bounded in \(\SobolevW^{1,p}\) for every \(p<\infty\).
Applying exactly the same compactness argument as in the proof of \cref{lem:general-equality-parallel-spinor} to the sequence \((B_j,\psi_j)\), and choosing \(p>2m+1\), we then obtain, after passing to a subsequence, a connection \(A_\infty\) on \(L\), a closed real \(\Lp^\infty\) \(2\)-form \(\eta_\infty\), and a nonzero spinor \(\psi_\infty\in \SobolevH^1(S)\) such that
\[
    \LineCurv_{A_\infty}=2\pi \iu\,\eta_\infty,
    \qquad
    \Comass{\eta_\infty}\le C,
    \qquad
    \nabla_{A_\infty}\psi_\infty=0.
\]
Moreover, the same equality-case argument as in \cref{lem:general-equality-parallel-spinor} implies that \(\abs{\psi_\infty}\) is constant and nonzero, that \(\scal_g\equiv 4m\pi C\), and that \(\iu\,\clm(\eta_\infty)\psi_\infty=-mC\,\psi_\infty\) almost everywhere. \qedhere
\end{proof}

By \cref{lem:general-equality-parallel-spinor-odd}, there exist a unitary connection \(A_\infty\) on \(L\), a closed real \(\Lp^\infty\) \(2\)-form \(\eta_\infty\), and a nonzero spinor \(\psi_\infty\in \SobolevH^1(S)\) such that \(\nabla_{A_\infty}\psi_\infty=0\) and \(\iu\,\clm(\eta_\infty)\psi_\infty=-mC\,\psi_\infty\) almost everywhere.
After rescaling \(\psi_\infty\), we may assume \(\abs{\psi_\infty}\equiv 1\).

Define a real \(2\)-form \(\omega\) by
\[
    \omega(X,Y):=-\innp{\iu\,\clm(X\wedge Y)\psi_\infty}{\psi_\infty}.
\]
Since Clifford multiplication is parallel and \(\nabla_{A_\infty}\psi_\infty=0\), we have \(\nabla\omega=0\) distributionally, hence \(\omega\) is smooth and parallel.

For almost every \(p\in M\), apply the rigidity statement in \cref{lem:general-clifford} to \(V=\T_pM\), \(E=S_p\), \(\eta=(\eta_\infty)_p\), and \(\psi=\psi_\infty(p)\).
This yields an orthonormal basis \(e_1,\dots,e_{2m+1}\) of \(\T_pM\) such that
\[
    (\eta_\infty)_p=C\,\omega_p=C\sum_{j=1}^m e^{2j-1}\wedge e^{2j}
\]
and, with \(B_j:=\iu\,\clm(e_{2j-1})\clm(e_{2j})\),
\[
    B_j\psi_\infty(p)=-\psi_\infty(p)
    \qquad\text{for every }j=1,\dots,m.
\]
Hence
\[
    \ker(\omega_p)=\R e_{2m+1}.
\]
Since \(\omega\) is parallel, \(\ker(\omega)\subset \T M\) is a parallel \(1\)-dimensional distribution.

Since \(\omega\) is smooth and \(\eta_\infty=C\,\omega\) almost everywhere, the form \(\eta_\infty\) is smooth.
Exactly as in the even-dimensional equality case, elliptic regularity then implies that \(A_\infty\) is smooth, and standard regularity implies that \(\psi_\infty\) is smooth.
Hence all identities above hold everywhere on \(M\).
Exactly as in the even-dimensional equality case we obtain \([\eta_\infty]=c_{\R}\) in \(\HZ^2(M;\R)\).
Since \(\eta_\infty=C\,\omega\), we have
\[
    [\omega]=\frac{c_{\R}}{C}=\frac{c_{\R}}{\comassnorm{c_{\R}}}
\]
in \(\HZ^2(M;\R)\).

Let \(\pi\colon (\widetilde{M},\tilde g)\to (M,g)\) be the universal cover.
Since \(M\) is closed, \((\widetilde{M},\tilde g)\) is complete and simply connected.
Pull back \(A_\infty\), \(\eta_\infty\), \(\omega\), and \(\psi_\infty\) to \(\widetilde{M}\), and keep the same notation.
The form \(\omega\) is parallel with \(1\)-dimensional kernel, so by the de~Rham splitting theorem there is an isometric splitting
\[
    (\widetilde{M},\tilde g)\cong (N,g_N)\times \R.
\]
Moreover, \(\omega\) restricts to a parallel nondegenerate \(2\)-form \(\omega_N\) on \(N\).
Let \(J\in \End(\T N)\) be defined by
\[
    g_N(JX,Y)=\omega_N(X,Y).
\]
By construction, \(\omega=\proj_N^*\omega_N\) under this splitting.
The pointwise normal form above shows that \(J^2=-\id\), so \(J\) is a parallel complex structure and \((N,g_N,J)\) is Kähler.

Finally, since \(\psi_\infty\) is parallel, the curvature identity \cite[Lemma~3.1]{moroianu-parallel} gives
\[
    0
    =
    \frac12\,\clm(\Ric(X))\psi_\infty
    -
    \frac12\,\clm(\ins_X\LineCurv_{A_\infty})\psi_\infty
\]
for every vector field \(X\).
Since \(\LineCurv_{A_\infty}=2\pi \iu\,\eta_\infty=2\pi \iu\,C\,\omega\), this becomes
\[
    0
    =
    \frac12\,\clm(\Ric(X))\psi_\infty
    -
    \pi\iu\,\clm(\ins_X\eta_\infty)\psi_\infty.
\]
If \(X\in \ker(\omega)\), then \(\ins_X\eta_\infty=0\), hence \(\clm(\Ric(X))\psi_\infty=0\) and thus \(\Ric(X)=0\).
If \(X\perp \ker(\omega)\), then \(\ins_X\omega=(JX)^\flat\) and \(\clm(JX)\psi_\infty=-\iu\,\clm(X)\psi_\infty\) as in the even-dimensional case, so \(\clm(\ins_X\eta_\infty)\psi_\infty=-\iu\,C\,\clm(X)\psi_\infty\) and therefore \(\Ric(X)=2\pi C\,X\).
In particular, \((N,g_N)\) is Einstein.
Since it is Kähler, it is Kähler--Einstein, and the claimed splitting of \((\widetilde{M},\tilde g)\) follows. \qedhere
\end{proof}

\section{Geometric Applications}\label{sec:applications}

\subsection{Stable norm, duality and calibrations}
Recall from \cref{ss:comass} that \(\Comass{\cdot}\) denotes the comass of a \(2\)-form and
\(\comassnorm{\cdot}\) the induced norm on \(\HZ^2(M;\R)\).
We now recall the dual homological notion that will be used in the applications.

A \(2\)-current on \(M\) is a continuous linear functional
\[
T:\Dforms^2(M)\to \R,
\]
where \(\Dforms^2(M)\) is equipped with its standard \(\Ct^\infty\)-topology.
Its boundary is the \(1\)-current \(\bd T\) defined by
\[
(\bd T)(\alpha):=T(\D\alpha), \qquad \alpha\in \Dforms^1(M).
\]
A \(2\)-current is a cycle if \(\bd T=0\). The mass of a \(2\)-current \(T\) is%
\[
M(T):=\sup\{\abs{T(\omega)}: \omega\in \Dforms^2(M),\ \Comass{\omega}\le 1\}.
\]
Let \(Z_2(M;\R)\) denote the vector space of \(2\)-cycles of finite mass and
let \(B_2(M;\R)\subset Z_2(M;\R)\) be the subspace of boundaries.
Via the standard identification
\[
\HZ_2(M;\R)\cong Z_2(M;\R)/B_2(M;\R),
\]
the stable norm of \(h\in \HZ_2(M;\R)\) is
\[
\stnorm{h}:=\inf\{M(T): T\in Z_2(M;\R),\ [T]=h\}.
\]
If \(\HZ_2(M;\Z)/\torsion\neq 0\), the stable \(2\)-systole is
\[
\sys^\st_2(M,g)
:=\min\{\stnorm{h}: 0\neq h\in \HZ_2(M;\Z)/\torsion\}.
\]

The following classical theorem states the duality between the stable norm on homology and the comass norm on cohomology; see Federer \cite[Theorem 4.10]{Federer1975} and
Gromov \cite[Proposition 4.35]{gromov-metric-structures}.

\begin{theorem}\label{theorem:stable-comass-duality}
For every \(h \in \HZ_2(M;\R)\),
\[
\stnorm{h}
= \sup\bigl\{ \pair{\varphi}{h} : \varphi \in \HZ^2(M;\R),\ \comassnorm{\varphi} \le 1 \bigr\}.
\]
\end{theorem}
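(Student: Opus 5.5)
The plan is to prove the two inequalities separately and obtain the nontrivial one from Hahn--Banach on the space of currents.

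\emph{The inequality \(\ge\).} Let \(\varphi\in \HZ^2(M;\R)\) with \(\comassnorm{\varphi}\le 1\), and let \(T\in Z_2(M;\R)\) satisfy \([T]=h\). For every \(\varepsilon>0\), choose a smooth closed representative \(\omega\) of \(\varphi\) with \(\Comass{\omega}\le 1+\varepsilon\), as in \cref{ss:comass}. Under the identification \(\HZ_2(M;\R)\cong Z_2/B_2\), the pairing \(\pair{\varphi}{h}\) equals \(T(\omega)\); this is well defined because \(T(\D\alpha)=(\bd T)(\alpha)=0\) and boundaries annihilate closed forms. By the definition of mass, \(T(\omega)\le (1+\varepsilon)M(T)\). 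Letting \(\varepsilon\downarrow0\) and then taking the infimum over \(T\) gives \(\pair{\varphi}{h}\le \stnorm{h}\).

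\emph{The inequality \(\le\).} This is the main step. First, \(\stnorm{\cdot}\) is a seminorm on the finite-dimensional space \(\HZ_2(M;\R)\), and it is finite because a smooth cycle represents any class. I will show that \(\stnorm{\cdot}\) is the quotient norm of the mass norm on the normed space \(Z_2\) by the subspace \(B_2\). Hahn--Banach then gives, for a given \(h\), a functional \(\ell\) on \(Z_2\) that vanishes on \(B_2\), satisfies \(\abs{\ell(T)}\le M(T)\), and has \(\ell(T_h)=\stnorm{h}\) for a representative \(T_h\) of \(h\). The difficulty is to realize \(\ell\) by a closed form of comass at most \(1\) up to \(\varepsilon\).

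To do this, I would avoid the abstract functional. Instead, I would use the dual description of mass as a supremum over smooth forms together with a finite-dimensional separation argument. Consider the convex set \(K=\{\omega \text{ closed}: \Comass{\omega}\le 1\}\) and its image \(\bar K\subset \HZ^2(M;\R)\), which is the unit ball of \(\comassnorm{\cdot}\) up to closure. If \(h\) satisfied \(\sup_{\varphi\in\bar K}\pair{\varphi}{h}<s<\stnorm{h}\), I would derive a contradiction. By finite-dimensional convex duality, the inequality on the left says that \(s^{-1}h\) lies in the polar set, so every closed form of comass at most \(1\) pairs with \(h\) to at most \(s\).

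Now fix a smooth cycle \(T_0\) representing \(h\), and consider the functional \(\omega\mapsto T_0(\omega)\) on the closed subspace of continuous closed forms. Modulo exact forms this functional is bounded by \(s\,\Comass{\omega}\), using the infimum over exact perturbations, where the closure of exact continuous forms is the relevant subspace. Extend it by Hahn--Banach to all continuous \(2\)-forms with norm at most \(s\) in the comass norm. By Riesz representation, the extension is a finite-mass current \(T\) with \(M(T)\le s\). It agrees with \(T_0\) on closed forms and vanishes on exact forms, so it is a cycle homologous to \(T_0\). Here I use de Rham's theorem for currents: a current that annihilates all closed forms is a boundary, and a cycle is determined up to a boundary by its pairing on closed forms. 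This contradicts \(s<\stnorm{h}\).

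I expect the main obstacle to be the bookkeeping in this last step. The bound on the quotient by exact forms must hold for \(\Comass{\omega+\D\beta}\), where \(\beta\) ranges over forms with \(\omega+\D\beta\) only continuous, since the infimum defining \(\comassnorm{\cdot}\) is taken over smooth forms. This requires approximating continuous closed forms by smooth closed forms with nearly the same comass, which I would do by smoothing with a partition of unity and a de Rham regularization operator. Since this is classical, I would in practice cite Federer \cite[4.10]{Federer1975} for these details.
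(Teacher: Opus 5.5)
Your argument is correct. Note that the paper does not prove this statement; it quotes it as a classical fact from Federer~\cite[Theorem 4.10]{Federer1975} and Gromov~\cite[Proposition 4.35]{gromov-metric-structures}. Your sketch is the standard argument behind those references:
\begin{itemize}
\item the easy inequality follows directly from the definition of mass;
\item the reverse inequality follows by extending \(\omega\mapsto T_0(\omega)\) from closed forms via Hahn--Banach to a current of mass at most \(s\), which is then a cycle homologous to \(T_0\) by de Rham's theorem for currents.
\end{itemize}
The citation is shorter, while your sketch makes the statement self-contained.

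You can drop the technical point you flag as the main obstacle. Hahn--Banach does not need a closed subspace, so apply it to the subspace of \emph{smooth} closed forms in \(\Dforms^2(M)\), with sublinear functional \(s\,\Comass{\cdot}\). On that subspace the needed bound follows immediately from your hypothesis. For smooth closed \(\omega\neq 0\) we have \(\pm\omega/\Comass{\omega}\in K\), so \(\abs{T_0(\omega)}=\abs{\pair{[\omega]}{h}}\le s\,\Comass{\omega}\). The extension \(\tilde\lambda\) then satisfies \(\abs{\tilde\lambda(\omega)}\le s\,\Comass{\omega}\) for all smooth \(\omega\). Since this bound is a continuous seminorm in the \(\Ct^\infty\) topology, \(\tilde\lambda\) is already a current of mass at most \(s\). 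No passage to continuous forms and no Riesz representation are needed.

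The rest goes through as you wrote it. \(\tilde\lambda\) is a cycle because \(\tilde\lambda(\D\alpha)=T_0(\D\alpha)=0\). Moreover, \(\tilde\lambda-T_0\) annihilates all smooth closed forms, so it is a boundary. Hence the approximation of continuous closed forms by smooth cohomologous ones with nearly the same comass is never used. That approximation is true, via de Rham's regularization, but unnecessary here.

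Similarly, your first Hahn--Banach paragraph on \(Z_2\) is a detour. With the paper's definition, \(\stnorm{\cdot}\) is the quotient mass norm by definition, and you never use that abstract functional afterwards.
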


When \(\HZ_2(M;\Z)\) has rank one, this duality takes a particularly nice form.

\begin{corollary}\label{corollary:rank-one-duality}
Suppose that \(\HZ_2(M;\Z)/\torsion \cong \Z\). Let \(a\) be a generator, and
let \(x \in \HZ^2(M;\Z)\) be the dual generator, that is, \(\pair{x}{a} = 1\). Then
\[
\comassnorm{x_\R} = \frac{1}{\stnorm{a}}.
\]
\end{corollary}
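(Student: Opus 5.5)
We derive this directly from \cref{theorem:stable-comass-duality}, exploiting that in rank one both norms are determined by their values on a single generator. Since \(\HZ_2(M;\Z)/\torsion\cong\Z\) is generated by \(a\), the universal coefficient theorem gives \(\HZ_2(M;\R)\cong \R\,a_\R\) and \(\HZ^2(M;\R)\cong \R\,x_\R\), where \(a_\R\) is the image of \(a\). Torsion dies over \(\R\), and the Kronecker pairing \(\HZ^2(M;\R)\times\HZ_2(M;\R)\to\R\) is perfect, so \(\pair{x_\R}{a_\R}=1\). Every \(\varphi\in\HZ^2(M;\R)\) is therefore uniquely of the form \(\varphi=t\,x_\R\) with \(t=\pair{\varphi}{a_\R}\in\R\).

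Next, we apply \cref{theorem:stable-comass-duality} to \(h=a_\R\). Since \(\comassnorm{\cdot}\) is a norm, \(\comassnorm{t\,x_\R}=\abs{t}\,\comassnorm{x_\R}\). Thus the constraint \(\comassnorm{\varphi}\le 1\) becomes \(\abs{t}\le 1/\comassnorm{x_\R}\), and \(\pair{\varphi}{a_\R}=t\). Here \(x_\R\neq 0\), so \(\comassnorm{x_\R}>0\). Hence
\[
\stnorm{a}=\sup\bigl\{t : \abs{t}\,\comassnorm{x_\R}\le 1\bigr\}=\frac{1}{\comassnorm{x_\R}},
\]
which is the claim after inverting. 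This also shows \(0<\stnorm{a}<\infty\).

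The only points needing care are bookkeeping rather than a real obstacle. First, \(\stnorm{a}\) means the stable norm of \(a_\R\); the definition via real cycles in this section is used, which agrees with the integral asymptotic definition from the introduction by Federer's theorem. Second, the identification of \(\HZ^2(M;\R)\) as one-dimensional spanned by \(x_\R\) has to be justified. I expect this second step to be the main thing to write out carefully, via \(\HZ^2(M;\R)\cong\operatorname{Hom}(\HZ_2(M;\Z),\R)\), which vanishes on torsion.
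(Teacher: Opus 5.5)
Your proposal is correct and matches the paper's proof: write every class as \(\varphi=t\,x_\R\), use homogeneity \(\comassnorm{t\,x_\R}=\abs{t}\,\comassnorm{x_\R}\), and evaluate the supremum in \cref{theorem:stable-comass-duality} at \(h=a\) to obtain \(1/\comassnorm{x_\R}\). Your extra remarks make explicit two points the paper leaves implicit: that \(\HZ^2(M;\R)\) is one-dimensional by universal coefficients, and that \(\comassnorm{x_\R}>0\) because \(\pair{x_\R}{a_\R}=1\).
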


\begin{proof}
Since \(\HZ^2(M;\R)\) is one-dimensional, every class \(\varphi \in \HZ^2(M;\R)\) can be
written as \(\varphi = t x_\R\) for some \(t \in \R\). Then
\[
\pair{\varphi}{a} = t
\qquad\text{and}\qquad
\comassnorm{\varphi} = \abs{t}\, \comassnorm{x_\R}.
\]
By \cref{theorem:stable-comass-duality},
\[
\stnorm{a}
= \sup\bigl\{ t : t \in \R,\ \abs{t}\, \comassnorm{x_\R} \le 1 \bigr\}
= \frac{1}{\comassnorm{x_\R}}.\qedhere
\]
\end{proof}

A closed \(2\)-form \(\omega\in \Dforms^2(M)\) with \(\Comass{\omega}\le 1\) is called a calibration.
If \(T\in Z_2(M;\R)\) and \(\omega\) is a calibration such that \(T(\omega)=M(T)\), then \(T\) is mass minimizing in its real homology class. Indeed,
\[
M(T)=T(\omega)=\pair{[\omega]}{[T]} \le \stnorm{[T]} \le M(T),
\]
so equality holds throughout.

In particular, if \(\Sigma\subset M\) is a compact oriented closed surface calibrated
by \(\omega\), i.e.
\[
\omega|_\Sigma=\vol_\Sigma,
\]
then its integration current \(T_\Sigma\) satisfies
\[
M(T_\Sigma)=\Area_g(\Sigma)=\int_\Sigma \omega,
\]
and hence
\[
\stnorm{[\Sigma]}=\Area_g(\Sigma).
\]
On a Kähler manifold \((M,g,J)\), Wirtinger's inequality implies that the Kähler form has comass \(1\), and every complex
curve with its complex orientation is calibrated by it; see \cite[Example I, p.~59]{HL-calibrated-geometries}.
For the calibration formalism and the mass-minimizing property of calibrated currents, we refer the reader to the foundational paper by Harvey and Lawson \cite{HL-calibrated-geometries}.

\subsection{Complex projective spaces}\label{sec:cpn}
Let \(J_0\) denote the standard complex structure on \(\CP^n\).
We denote by \(\mathcal O_{\CP^n}(-1)\) the tautological line bundle and by \(\mathcal O_{\CP^n}(1)\) its dual bundle.
More generally, we consider powers \(\mathcal O_{\CP^n}(k)=\mathcal O_{\CP^n}(1)^{\otimes k}\).
Let
\[
h:=\chernclass_1\bigl(\mathcal O_{\CP^n}(1)\bigr)\in \HZ^2(\CP^n;\Z).
\]
Then \(\HZ^2(\CP^n;\Z)\cong \Z\) is generated by \(h\). If
\(\ell\cong \CP^1\subset \CP^n\) is a projective line, endowed with its complex
orientation, then its homology class
\[
a:=[\ell]\in \HZ_2(\CP^n;\Z)
\]
generates \(\HZ_2(\CP^n;\Z)\cong \Z\), and the pairing satisfies
\[
\pair{h}{a}=\int_\ell h = 1.
\]
Equivalently, \(h\) is the Poincaré dual of the class of a hyperplane
\(\CP^{n-1}\subset \CP^n\).
Moreover, \((\CP^n,J_0)\) is Fano of index \(n+1\), since
\[
K^{-1}_{\CP^n}\cong \mathcal O_{\CP^n}(n+1),
\]
where \(K^{-1}_{\CP^n}\) denotes the anticanonical bundle, that is, the dual of the canonical bundle \(K_{\CP^n}=\Lambda^{n,0}\T^*\CP^n\).

Let \(g_\FS\) be the Fubini--Study metric on \((\CP^n,J_0)\), normalized by
\[
\int_\ell \omega_\FS=\pi
\]
for a projective line \(\ell\subset \CP^n\). Then
\[
[\omega_\FS]=\pi h.
\]
Moreover, \(g_\FS\) is Kähler--Einstein with
\[
\Ric_{g_\FS}=2(n+1)\,g_\FS,
\qquad
\scal_{g_\FS}=4n(n+1).
\]

Next, we prove Theorem \ref{thm:main} which we restate below for convenience.

\begin{theorem}\label{thm:cpn-rigidity}
Let \(M\) be a smooth manifold diffeomorphic to \(\CP^n\).
Let \(g\) be a Riemannian metric on \(M\) such that
\[
\scal_g \ge 4n(n+1).
\]
Then
\[
\sys_2^\st(M,g)\le \pi.
\]
Moreover, if equality holds, then there exist a complex structure \(J\) on \(M\) and
a biholomorphism
\[
B \colon (M,J)\to (\CP^n,J_0)
\]
such that
\[
g=B^*g_\FS.
\]
\end{theorem}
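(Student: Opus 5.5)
The plan is to apply \cref{thm:general_comass_comparison} with \(m=n\) and \(c=(n+1)x\), where \(x\in\HZ^2(M;\Z)\) is the generator pulled back from \(h\) under a fixed diffeomorphism \(M\cong\CP^n\), with sign chosen so that \(\pair{x}{a}=1\) for a generator \(a\) of \(\HZ_2(M;\Z)\).

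\textbf{Checking the hypotheses.} First, \(w_2(\T\CP^n)\equiv (n+1)h \bmod 2\), since \(c_1(\T\CP^n)=(n+1)h\). This is a topological statement and is preserved under diffeomorphism up to the sign of the generator, which does not matter mod \(2\) or for the pairing below. Second, \(\pair{[\CP^n]}{\eu^{(n+1)h/2}\Ahat}\) is the index of the \(\Spinc\) Dirac operator for the canonical \(\Spinc\) structure of the Kähler manifold \(\CP^n\). That operator is \(\sqrt2(\bar\partial+\bar\partial^*)\), so the pairing equals the Todd genus \(\chi(\CP^n,\mathcal O)=1\neq 0\). Here \(c=c_1(K^{-1})\), and \(\eu^{c_1/2}\Ahat=\Td\). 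If the orientation of \(M\) disagrees, I replace \(c\) by \(-c\), i.e.\ use the conjugate structure; the index is still \(\pm1\).

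\textbf{The inequality.} \cref{thm:general_comass_comparison} gives
\[
4n(n+1)\le \min\scal_g\le 4n\pi\,\comassnorm{(n+1)x_\R}=4n(n+1)\pi\,\comassnorm{x_\R}.
\]
Hence \(\comassnorm{x_\R}\ge 1/\pi\). By \cref{corollary:rank-one-duality}, \(\sys^\st_2=\stnorm{a}=1/\comassnorm{x_\R}\le\pi\). Note that \(\HZ_2\) has rank one, so the systole is attained at \(\pm a\), which have equal stable norm.

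\textbf{Equality case.} If \(\sys^\st_2=\pi\), then every inequality in the chain above is an equality. In particular \(\min\scal_g=4n\pi\comassnorm{c_\R}\) with \(c_\R\neq0\). The rigidity part of \cref{thm:general_comass_comparison} then makes \((M,g,J)\) Kähler--Einstein, with Kähler form \(\omega\) satisfying \([\omega]=c_\R/\comassnorm{c_\R}=\pi x_\R\). The Einstein constant is \(\Ric=2\pi C g\) with \(C=(n+1)/\pi\), so \(\Ric=2(n+1)g\).

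Next I identify the first Chern class. The Ricci form is \(\rho=2(n+1)\omega\), so \(c_1(M,J)_\R=[\rho]/2\pi=(n+1)x_\R\). Since \(H^2(M;\Z)\) is torsion-free, \(c_1(M,J)=(n+1)x\), and \(x\) is a positive class because \([\omega]=\pi x_\R\). Therefore \((M,J)\) is a compact Kähler manifold with ample \(K^{-1}\) divisible by \(n+1\). By Kobayashi--Ochiai there is a biholomorphism \(B\colon(M,J)\to(\CP^n,J_0)\). Alternatively, Hirzebruch--Kodaira applies, since \(M\) is diffeomorphic to \(\CP^n\).

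Finally, \(B_*g\) is a Kähler--Einstein metric on \(\CP^n\) with Kähler class \([B_*\omega]=\pi h\), after checking that the signs match. The Fubini--Study metric \(g_\FS\) has the same class and the same Einstein constant. By Bando--Mabuchi uniqueness of Kähler--Einstein metrics up to automorphisms, there is \(\Phi\in\mathrm{Aut}(\CP^n)\) with \(B_*g=\Phi^*g_\FS\). Replacing \(B\) by \(\Phi\circ B\) gives \(g=B^*g_\FS\).

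\textbf{Main obstacle.} The delicate step is bookkeeping of orientation and sign. I need to make sure the diffeomorphism-induced orientation gives the nonzero index, and that the Kähler class produced by the theorem is positive with respect to \(J\), so that Kobayashi--Ochiai applies with \(c_1=(n+1)\cdot\)(ample generator). The index computation via the Todd genus is otherwise routine.
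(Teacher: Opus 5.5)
Your proposal is correct and follows the paper's proof essentially step by step: the choice \(c=(n+1)x\), the Todd-genus computation of the index, \cref{thm:general_comass_comparison} combined with \cref{corollary:rank-one-duality} for the inequality, and Bando--Mabuchi uniqueness in the equality case. The only deviation is minor. You first show \(c_1(M,J)=(n+1)x\) and then invoke Kobayashi--Ochiai, which also fixes the sign of the K\"ahler class under \(B\) automatically; the paper instead invokes Hirzebruch--Kodaira--Yau and recovers \([\widetilde\omega]=\pi h\) via Chern--Weil, and uses your route only in the odd-dimensional case. Your orientation worry is also moot: you can simply pull back the orientation along the diffeomorphism, as the paper does, and the relevant pairing is \(\pm1\) for either orientation and either sign of \(c\).
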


\begin{proof}
Fix a diffeomorphism \(f\colon M\to \CP^n\) and endow \(M\) with the pullback orientation.
Let
\[
b:=f_*^{-1}a\in \HZ_2(M;\Z),\qquad x:=f^*h\in \HZ^2(M;\Z).
\]
Then \(\pair{x}{b}=\pair{h}{a}=1\).
Every nonzero class in \(\HZ_2(M;\Z)\) is of the form \(kb\) with \(k\in \Z\setminus\{0\}\).
By homogeneity,
\[
\sys^\st_2(M,g)=\stnorm{b}.
\]

Set
\[
c:=(n+1)x\in \HZ^2(M;\Z),
\]
and let \(c_{\R}\in \HZ^2(M;\R)\) be its image in real cohomology.
As \(f\) is a diffeomorphism, \(c\equiv w_2(\T M)\pmod 2\) and \(\Ahat(\T M)=f^*\Ahat(\T\CP^n)\).
Moreover,
\[
\eu^{c/2}\Ahat(\T M)
=
f^*\!\left(\eu^{\chernclass_1(\T\CP^n)/2}\Ahat(\T\CP^n)\right)
=
f^*\!\Td(\T\CP^n)
\]
from which
\[
\pair{[M]}{\eu^{c/2}\Ahat(\T M)}
=
\pair{[\CP^n]}{\Td(\T\CP^n)}
=
\chi(\mathcal O_{\CP^n})
=
1.
\]
Thus the hypotheses of \cref{thm:general_comass_comparison} are satisfied for the class \(c\).
Applying \cref{thm:general_comass_comparison} with \(m=n\) and using \cref{corollary:rank-one-duality}, we obtain
\[
4n(n+1)
\le
\min_M \scal_g
\le
4n\pi \comassnorm{c_{\R}}
=
4n\pi(n+1)\comassnorm{x_\R}
=
\frac{4n(n+1)\pi}{\sys^\st_2(M,g)}.
\]
Hence
\[
\sys_2^\st(M,g)\le \pi.
\]

Assume now that \(\sys^\st_2(M,g)=\pi\).
Then equality holds throughout the previous chain of inequalities, so equality holds in \cref{thm:general_comass_comparison}.
It follows that there exists a complex structure \(J\) on \(M\) such that \((M,g,J)\) is Kähler--Einstein with Einstein constant \(2(n+1)\), and its Kähler form \(\omega\) satisfies
\[
[\omega]
=
\frac{c_{\R}}{\comassnorm{c_{\R}}}
=
\frac{(n+1)x_\R}{(n+1)\comassnorm{x_\R}}
=
\pi x_\R.
\]
Since \(M\) is diffeomorphic to \(\CP^n\), by the Hirzebruch--Kodaira--Yau theorem (see \cite[Theorem~1.1]{Tosatti2017} originally proved in \cite{HirzebruchKodaira1957,Yau1977}), there is a biholomorphism
\[
F \colon (M,J) \to (\CP^n,J_0).
\]

If we set
\[
\widetilde{\omega} := (F^{-1})^* \omega,
\qquad
\widetilde{g} := (F^{-1})^* g,
\]
then \(\widetilde{g}\) is a Kähler--Einstein metric on the standard complex manifold \(\CP^n\). Let \(\widetilde{\rho}\) denote its Ricci form.
Since \(F\) is a biholomorphism,
\[
\widetilde{\rho} = (F^{-1})^* \rho = 2(n+1)\widetilde{\omega}.
\]
Using Chern--Weil theory for \(\T^{1,0}\CP^n\) (see \cite[\S4.4]{HuybrechtsCG}), together with \cite[Prop.~4.A.11]{HuybrechtsCG} and the previous identity, we conclude
\begin{equation}\label{eq:c_1=(n+1)x}
(n+1)h=\chernclass_1(\T^{1,0}\CP^n)=\frac{\iu}{2\pi}[\tr_{\C}(F_\nabla)]=\frac{[\tilde\rho]}{2\pi}=\frac{(n+1)[\tilde\omega]}{\pi}.
\end{equation}
Hence
\[
[\widetilde{\omega}] = \pi h = [\omega_\FS].
\]

Thus \(\widetilde{g}\) is a Kähler--Einstein metric on the standard complex manifold
\(\CP^n\), and its Kähler form \(\widetilde{\omega}\) lies in the same Kähler
class as \(\omega_\FS\). In order to apply the Bando--Mabuchi uniqueness theorem~\cite[Theorem~A]{BandoMabuchi1987},
we rescale and set
\[
\widehat{\omega} := 2(n+1)\widetilde{\omega},
\qquad
\widehat{\omega}_\FS := 2(n+1)\omega_\FS.
\]
Since constant rescaling does not change the Ricci form, both
\(\widehat{\omega}\) and \(\widehat{\omega}_\FS\) are Kähler--Einstein and satisfy
\[
[\widehat{\omega}] = [\widehat{\omega}_\FS] = 2\pi \chernclass_1(\CP^n).
\]
By the Bando--Mabuchi uniqueness theorem, there exists a holomorphic automorphism \(\Phi \in \Aut(\CP^n)\) such that \(\widehat{\omega} = \Phi^* \widehat{\omega}_\FS\).
Equivalently,
\[
\widetilde{\omega} = \Phi^* \omega_\FS,
\qquad
\widetilde{g} = \Phi^* g_\FS.
\]
Therefore
\[
g = F^* \widetilde{g} = F^*(\Phi^* g_\FS) = (\Phi \circ F)^* g_\FS.
\]
Setting
\[
B := \Phi \circ F \colon (M,J) \to (\CP^n,J_0)
\]
completes the proof.
\end{proof}

\begin{remark}
The constant \(\pi\) in \cref{thm:cpn-rigidity} is sharp. Indeed, for the normalized Fubini--Study metric \(g_\FS\), the K\"ahler form \(\omega_\FS\) is a calibration and calibrates every projective line \(\ell \cong \CP^1\), see \cite[Example~1, page 59]{HL-calibrated-geometries}.
Hence \(\ell\) is mass minimizing in its real homology class and therefore
\[
\stnorm{a}=\Area_{g_\FS}(\ell)=\int_\ell \omega_\FS=\pi.
\]
\end{remark}

\subsection{Extremality and rigidity in the odd-dimensional case}
We now prove Theorem \ref{thm:main-odd}, which we restate below for convenience.

\begin{theorem}\label{thm:cP^1xS^1}
Let \(M\) be a smooth manifold diffeomorphic to \(\CP^n \times \Sphere^1\).
Let \(g\) be a Riemannian metric on \(M\) such that
\[
\scal_g \ge 4n(n+1).
\]
Then
\[
\sys^\st_2(M,g)\le \pi.
\]
Moreover, if equality holds, then the universal cover of \((M,g)\) is isometric to
\[
(\CP^n,g_\FS)\times \R.
\]
\end{theorem}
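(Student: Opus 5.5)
We mirror the proof of \cref{thm:cpn-rigidity}, with \cref{thm:general_comass_comparison_odd} in place of \cref{thm:general_comass_comparison}. Fix a diffeomorphism \(f\colon M\to \CP^n\times\Sphere^1\) and pull back the orientation. By Künneth, \(\HZ_2(M;\Z)/\torsion\cong\Z\) (for \(n\ge 1\)), generated by \(b:=f_*^{-1}[\ell\times\{\mathrm{pt}\}]\). The dual generator is \(x:=f^*\proj_1^*h\), so \(\sys^\st_2(M,g)=\stnorm{b}\). Set \(c:=(n+1)x\) and \(\xi:=f^*\proj_2^*[d\theta]\in\HZ^1(M;\Z)\), the pullback of the generator of \(\HZ^1(\Sphere^1;\Z)\).

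Since \(\T(\CP^n\times\Sphere^1)\cong\proj_1^*\T\CP^n\oplus\underline{\R}\), we get \(w_2(\T M)\equiv c\pmod 2\) and \(\Ahat(\T M)=f^*\proj_1^*\Ahat(\T\CP^n)\). Hence
\[
\pair{[M]}{\xi\smile\eu^{c/2}\Ahat(\T M)}=\pm\pair{[\CP^n]}{\Td(\T\CP^n)}\cdot\pair{[\Sphere^1]}{[d\theta]}=\pm1\neq0.
\]
The sign depends on orientation conventions and is irrelevant. Applying \cref{thm:general_comass_comparison_odd} with \(m=n\) and \cref{corollary:rank-one-duality} gives
\[
4n(n+1)\le\min_M\scal_g\le 4n\pi\comassnorm{c_\R}=\frac{4n(n+1)\pi}{\sys^\st_2(M,g)},
\]
so \(\sys^\st_2(M,g)\le\pi\).

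Now suppose equality holds. Then equality holds in \cref{thm:general_comass_comparison_odd}, with \(C=\comassnorm{c_\R}=(n+1)/\pi\). So there is a parallel form \(\omega\) with \([\omega]=\pi x_\R\). The universal cover splits isometrically as \(N\times\R\), where \(N\) is a \(2n\)-dimensional Kähler--Einstein manifold and \(\widetilde\omega=\proj_N^*\omega_N\). From the proof, \(\Ric=2\pi C=2(n+1)\) on the \(N\)-directions.

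We next identify \(N\). Since \(\widetilde M\cong \CP^n\times\R\) is diffeomorphic to \(N\times\R\), the manifold \(N\) is homotopy equivalent to \(\CP^n\). Moreover, \(N\) is complete with \(\Ric_N=2(n+1)g_N>0\), so Myers' theorem makes \(N\) compact and simply connected. Its Kähler class is determined by \([\omega]=\pi x_\R\): under \(\HZ^2(N)\cong\HZ^2(\widetilde M)\), \([\omega_N]\) corresponds to \(\pi\) times the generator \(\tilde x\) dual to the lifted line class. The Ricci form is \(2(n+1)\omega_N\), so Chern--Weil as in \eqref{eq:c_1=(n+1)x} gives \(\chernclass_1(N)=(n+1)\tilde x\). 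This is a positive class, so \(N\) is Fano with first Chern class divisible by \(n+1\) in \(\HZ^2(N;\Z)\). By Kobayashi--Ochiai, \(N\) is biholomorphic to \((\CP^n,J_0)\). Alternatively, the Hirzebruch--Kodaira--Yau theorem applies if we know \(N\) is diffeomorphic to \(\CP^n\); the homotopy equivalence alone is not enough for that, which is why we use Kobayashi--Ochiai. Finally, the argument of \cref{thm:cpn-rigidity} (pullback, \([\widetilde\omega]=\pi h\), Bando--Mabuchi) identifies \(g_N\) with \(g_\FS\) up to biholomorphism. Hence \(\widetilde M\) is isometric to \((\CP^n,g_\FS)\times\R\).

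The main obstacle is this last identification. One must carefully transfer the cohomological data from \(M\) to the compact factor \(N\). In particular, one has to check that the integral generator \(\tilde x\) is primitive in \(\HZ^2(N;\Z)\cong\Z\), so that divisibility of \(\chernclass_1(N)\) by \(n+1\) is genuinely the Kobayashi--Ochiai hypothesis. This requires the homotopy equivalence \(N\simeq\widetilde M\) to respect the integral lattice, which it does, and Myers' theorem to guarantee compactness.
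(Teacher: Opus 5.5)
Your proposal is correct and follows the paper's proof essentially step by step. It uses the same classes \(b\), \(x\), \(\xi\), \(c=(n+1)x\) and the same Todd-class index computation, and it applies \cref{thm:general_comass_comparison_odd} together with \cref{corollary:rank-one-duality}. In the equality case it likewise uses the splitting \(\widetilde M\cong N\times\R\), the identity \(\chernclass_1(N)=(n+1)\) times a generator, and then Kobayashi--Ochiai and Bando--Mabuchi.

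The differences are cosmetic. You get compactness of \(N\) from Myers' theorem instead of from the topology of \(\widetilde M\cong\CP^n\times\R\). You read the Einstein constant \(2\pi C=2(n+1)\) directly off the proof rather than deriving it from \(\scal_{g_N}\equiv 4n(n+1)\).
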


\begin{proof}
Fix a diffeomorphism \(f\colon M\to \CP^n\times \Sphere^1\) and endow \(M\) with the pullback orientation.
Let \(p_1,p_2\) denote the two projections from \(\CP^n\times \Sphere^1\) onto \(\CP^n\) and \(\Sphere^1\), respectively.
Retain the notation \(a\in \HZ_2(\CP^n;\Z)\) and \(h\in \HZ^2(\CP^n;\Z)\) from \cref{sec:cpn}, where \(a=[\ell]\) is the class of a projective line and \(h=\chernclass_1(\mathcal O_{\CP^n}(1))\).
Choose \(s_0\in \Sphere^1\) and set
\[
b:=f^{-1}_*[\ell\times\{s_0\}]\in \HZ_2(M;\Z),\qquad
x:=f^*p_1^*h\in \HZ^2(M;\Z),
\]
and
\[
\xi:=f^*p_2^*\theta\in \HZ^1(M;\Z),
\]
where \(\theta\in \HZ^1(\Sphere^1;\Z)\) denotes the positive generator. Then
\[
\pair{x}{b} =1.
\]
Since \(\HZ_2(M;\Z)\cong \Z\), every nonzero class in \(\HZ_2(M;\Z)\) is of the form
\(kb\) with \(k\in \Z\setminus\{0\}\). By homogeneity,
\[
\sys^\st_2(M,g)=\stnorm{b}.
\]

Set
\[
c:=(n+1)x\in \HZ^2(M;\Z),
\]
and let \(c_{\R}\in \HZ^2(M;\R)\) be its image in real cohomology. Since \(f\) is a
diffeomorphism,
\[
c\equiv w_2(\T M)\pmod 2.
\]
Moreover,
\[
\eu^{c/2}\Ahat(\T M)
=
f^*\bigl(p_1^*\Td(\T\CP^n)\bigr),
\]
and hence
\[
\begin{aligned}
\pair{[M]}{\xi\smile \eu^{c/2}\Ahat(\T M)}
&=
\pair{[\CP^n\times \Sphere^1]}{p_2^*\theta\smile p_1^*\Td(\T\CP^n)}\\
&=
\pair{[\CP^n]}{\Td(\T\CP^n)}
=
\chi(\mathcal O_{\CP^n})=1.
\end{aligned}
\]
Thus the hypotheses of \cref{thm:general_comass_comparison_odd} are satisfied for the classes \(c\) and \(\xi\).

Applying \cref{thm:general_comass_comparison_odd} with \(m=n\) and using \cref{corollary:rank-one-duality}, we obtain
\[
4n(n+1)
\le \min_M \scal_g
\le 4n\pi \comassnorm{c_{\R}}
=4n\pi(n+1)\comassnorm{x_{\R}}
=
\frac{4n(n+1)\pi}{\sys^\st_2(M,g)}.
\]
Hence
\[
\sys^\st_2(M,g)\le \pi.
\]

Assume now that \(\sys^\st_2(M,g)=\pi\).
Then equality holds throughout the previous chain of inequalities, so equality holds in \cref{thm:general_comass_comparison_odd}.
It follows that there exists a parallel \(2\)-form \(\omega\) on \(M\) such that
\[
[\omega]
=
\frac{c_{\R}}{\comassnorm{c_{\R}}}
=
\pi x_\R,
\]
and that the universal cover \((\widetilde M,\widetilde g)\) is isometric to
\[
(N,g_N)\times \R,
\]
where \(N\) is a Kähler--Einstein manifold of real dimension \(2n\).
If \(\widetilde\omega\) denotes the lift of \(\omega\) to \(\widetilde M\), and \(\omega_N\) the Kähler form of \(N\), then
\[
\widetilde\omega=\proj_N^*\omega_N.
\]

Let \(q:\widetilde M\to M\) be the universal covering.
Since \(\widetilde M\) is diffeomorphic to \(\CP^n\times\R\), the manifold \(N\) is closed and homotopy equivalent to \(\CP^n\).
Hence, \(\HZ^2(\widetilde M;\Z)\cong \HZ^2(N;\Z)\cong \Z\).
Therefore, there exists a unique class \(h_N\in \HZ^2(N;\Z)\) such that
\[
q^*x=\proj_N^*h_N.
\]
Since \([\widetilde\omega]=\pi q^*x\), we obtain
\[
[\omega_N]=\pi h_N.
\]

Furthermore, the scalar curvature of the product \((N,g_N)\times\R\) is pulled back from \(N\), and the covering map \(q:(\widetilde M,\widetilde g)\to(M,g)\) is a local isometry.
Therefore,
\[
\proj_N^*\scal_{g_N}=
\scal_{\widetilde g}=
q^*\scal_g
\equiv 4n(n+1).
\]
Thus,
\[
\scal_{g_N}\equiv 4n(n+1).
\]
Since \(g_N\) is Kähler--Einstein of complex dimension \(n\), its Ricci form
\(\rho_N\) satisfies
\[
\rho_N = 2(n+1)\omega_N.
\]
Therefore
\[
\chernclass_1(N)=\frac{[\rho_N]}{2\pi}
=(n+1)\frac{[\omega_N]}{\pi}
=(n+1)h_N.
\]
Thus \(N\) is Fano of index \(n+1\).
By the theorem of Kobayashi--Ochiai~\cite[Theorem A]{DedieuHoering2017} (or originally~\cite{KobayashiOchiai1973}), there exists a biholomorphism
\[
F:(N,J)\to (\CP^n,J_0).
\]
If we set
\[
\widetilde\omega_N:=(F^{-1})^*\omega_N,\qquad
\widetilde g_N:=(F^{-1})^*g_N,
\]
then \(\widetilde g_N\) is a Kähler--Einstein metric on \(\CP^n\) and
\[
[\widetilde\omega_N]=\pi h=[\omega_\FS].
\]
Arguing exactly as in the proof of \cref{thm:cpn-rigidity} and applying the Bando--Mabuchi uniqueness theorem~\cite[Theorem~A]{BandoMabuchi1987}, we conclude that there exists a biholomorphism
\[
B:(N,J)\to (\CP^n,J_0)
\]
such that
\[
g_N=B^*g_\FS.
\]
Hence
\[
(\widetilde M,\widetilde g)\cong (\CP^n,g_\FS)\times \R.
\]
This completes the proof.
\end{proof}

\begin{remark}
The constant \(\pi\) in \cref{thm:cP^1xS^1} is sharp.
Indeed, for the product metric \(g_\FS+(\DD t)^2\) on \(\CP^n\times \Sphere^1\), the closed \(2\)-form \(\proj_1^*\omega_\FS\) is a calibration and calibrates every projective line \(\ell\times\{s\}\).
Hence
\[
\sys^\st_2(\CP^n\times \Sphere^1,g_\FS+(\DD t)^2)=\pi.
\]
\end{remark}

\printbibliography

\end{document}